\newtheorem{proposition}{Proposition}
\newtheorem{theorem}{Theorem}
\newtheorem{lemma}{Lemma}
\tikzstyle{vertex}=[circle, draw, inner sep=0pt, minimum size=1.5em]
\tikzstyle{svertex}=[square, draw, inner sep=0pt, minimum size=1.5em]
\newcommand{\vertex}{\node[vertex]}
\newcommand{\suchthat}{\;\ifnum\currentgrouptype=16 \middle\fi|\;}
\newcommand{\dist}{\operatorname{dist}}
\newcommand{\vast}{\bBigg@{4}}
\newcommand{\Vast}{\bBigg@{2.5}}
\journal{Operations Research Letters}
\begin{document}

\begin{frontmatter}

\title{Optimal design of vaccination policies: A case study for Newfoundland and Labrador}

\author[1]{Faraz Khoshbakhtian}

\author[2]{Hamidreza Validi}

\author[3]{Mario Ventresca}

\author[1,4]{Dionne Aleman}

\affiliation[1]{organization={Department of Mechanical \& Industrial Engineering, University of Toronto},
state={ON},
country={CA}}
            
\affiliation[2]{organization={Department of Industrial, Manufacturing \& Systems Engineering, Texas Tech University},
state={TX},
country={U.S.}}  
            
\affiliation[3]{organization={School of Industrial Engineering, Purdue University},
state={IN},
country={U.S.}}  

\affiliation[4]{organization={Institute for Pandemics, University of Toronto},
state={ON},
country={CA}}

\begin{abstract}
This paper proposes pandemic mitigation vaccination policies for Newfoundland and Labrador (NL) based on two compact mixed integer programming (MIP) models of the distance-based critical node detection problem (DCNDP). 
Our main focus is on two variants of the DCNDP that seek to minimize the number of connections with lengths of at most one (1-DCNDP) and two (2-DCNDP).
A polyhedral study for the 1-DCNDP is conducted, and new aggregated inequalities are provided for the 2-DCNDP. 
The computational experiments show that the 2-DCNDP with aggregated inequalities outperforms the one with disaggregated inequalities for graphs with a density of at least $0.5 \%$.
We also study the strategic vaccine allocation problem as a real-world application of the DCNDP and conduct a set of computational experiments on a simulated contact network of NL.
Our computational results demonstrate that the DCNDP-based strategies can have a better performance in comparison with the real-world strategies implemented during COVID-19.
\end{abstract}

\begin{keyword}
the distance-based critical node detection problem \sep vaccination policies \sep mixed integer programming 
\end{keyword}
\end{frontmatter}

\section{Introduction}
\label{intro}
Vaccine programs are the cornerstones of public health and play an indispensable role in preventing, stopping, or controlling the spread of infectious diseases~\cite{hussein2015vaccines}. 
The global impact of the recent COVID-19 pandemic underscores the urgency of developing vaccination policies to minimize human cost and the socioeconomic disruption of future infectious disease outbreaks~\cite{miller2020disease,ihme2021modeling}. 
Vaccines are among the most effective tools in pandemic mitigation but are usually in limited supply. 
The tension between the supply and the demand makes strategic vaccine rollout planning an essential part of any pandemic playbook. 
Improved vaccination policies can save lives, drastically reduce the number of infectious diseases, and significantly reduce the financial costs of pandemics~\cite{mulberry2021vaccine,bubar2021model}. 

There are multiple metrics to assess the effectiveness of a vaccination policy in a community.
The basic reproduction number ($R_0$) is a well-known metric extensively utilized by public health professionals for analyzing the structural complexities of community networks and evaluating the potential for disease propagation~\cite{ventresca2013evaluation}.
While an $R_0<1$ denotes that the number of infectious cases is decreasing, an $R_0>1$ indicates that the number of infectious cases is increasing. 
\citet{hurford2021} assumed an $R_0$ of 2.4 as the pre-pandemic basic reproduction number for the province of Newfoundland and Labrador.
In a network context, $R_0$ is a degree-based metric that is only reliant on immediate neighboring nodes. However, disease spread may be more accurately represented by considering the closeness of any two nodes, rather than whether or not they are directly linked, making distance-based metrics potentially more appropriate for investigation of vaccination policy effectiveness in mitigating disease spread~\cite{alozie2021efficient}.
To minimize the number of short connections, we employ the distance-based critical node detection problem (DCNDP) to decrease the spread of infectious diseases~\cite{salemi2022}, where critical nodes are vaccinated, thus preventing disease spread through them.

We conduct a polyhedral study and develop mixed integer programming (MIP) formulations for two variants of DCNDP to identify individuals for vaccination. 
DCNDP is a specific variant of the critical node detection problem (CNDP). 
CNDP is a well-known class of combinatorial optimization problems that identifies a subset of \textit{critical nodes} from a graph such that their removal optimizes a pre-defined measure of connectivity (e.g., pairwise connectivity among the nodes) over the residual graph~\cite{lalou2018critical}. 
This class of problems has natural applications in a wide range of real-world problems such as computational biology and drug discovery~\cite{boginski2009identifying}, communication networks~\cite{commander2007wireless}, and most relevant for the current work, to epidemic control~\cite{ventresca2015efficiently, charkhgard2018integer, tao2006epidemic}.

Letting $k$ as the hop parameter of the DCNDP, we denote $k$-DCNDP as the problem of minimizing the number of connections of length at most $k$.
We specifically consider $k \in \{1,2\}$, which provides unique opportunities to conduct polyhedral studies and propose new valid inequalities.
However, all previous studies of DCNDP considered $k \ge 3$. 
\citet{veremyev2015critical} introduced DCNDP and proposed four MIP formulations for the problem. 
~\citet{salemi2022} developed two MIP formulations for the problem and provided theoretical comparisons between their formulations and a MIP model proposed by~\citet{veremyev2015critical}.
~\citet{hooshmand2020efficient} provided a Benders decomposition framework to improve run times for instances with large diameters and weighted edges.
~\citet{alozie2021efficient} developed a path-based MIP formulation and efficient separation algorithms to solve the problem, and later proposed a heuristic approach~\cite{alozie2022heuristic}.
~\citet{aringhieri2019polynomial} provided polytime and pseudo-polytime algorithms for solving the problem on paths, trees, and series-parallel graphs.

Figure~\ref{DCNDP_example} illustrates optimal solutions of the 1-DCNDP and 2-DCNDP for two instances of the DCNDP on graph $G=(V,E)$ with budget (i.e., the maximum number of critical nodes that can be removed) $b=1$, vertex deletion cost $a_v = 1$ for every vertex $v \in V$, and connection cost $c_{uv} = 1$ for every vertex pair $\{u,v\} \in \binom{V}{2}$ with a connection of length at most $k$ for $k$-DCNDP. 

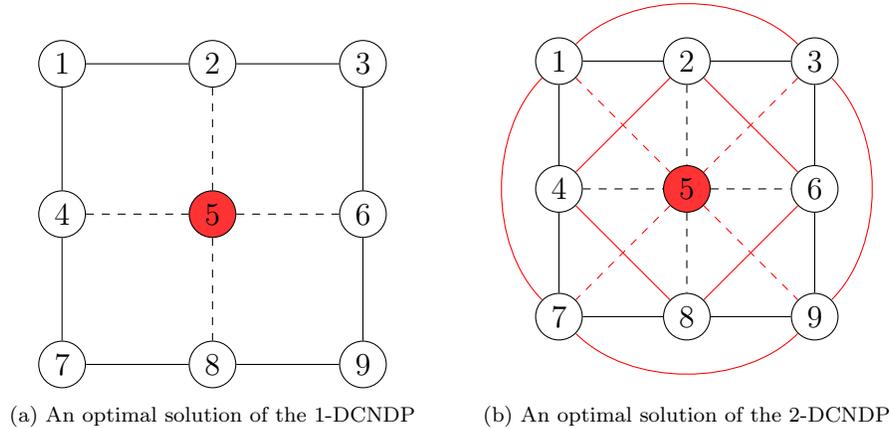
\begin{figure}[tbp]
\begin{center}

\begin{subfigure}{.45\textwidth}
\centering
\begin{tikzpicture}[scale=0.2]
\vertex(1) at (-5,24) [] {$1$};
\vertex(2) at (5,24) [] {$2$};
\vertex(3) at (15,24) [] {$3$};
\vertex(4) at (-5,14) [] {$4$};
\vertex[fill=red!80](5) at (5,14) [] {$5$};
\vertex(6) at (15,14) [] {$6$};
\vertex(7) at (-5,4) [] {$7$};
\vertex(8) at (5,4) [] {$8$};
\vertex(9) at (15,4) [] {$9$};

\draw (1) -- (2);
\draw (2) -- (3);

\draw (1) -- (4);
\draw [dashed] (2) -- (5);
\draw (3) -- (6);

\draw [dashed] (4) -- (5);
\draw [dashed] (5) -- (6);

\draw (4) -- (7);
\draw [dashed] (5) -- (8);
\draw (6) -- (9);

\draw (7) -- (8);
\draw (8) -- (9);
\end{tikzpicture}
\caption{An optimal solution of the 1-DCNDP}
\label{subfig:a}
\end{subfigure}
\begin{subfigure}{.45\textwidth}
\centering
\begin{tikzpicture}[scale=0.17]
\vertex(1) at (-5,24) [] {$1$};
\vertex(2) at (5,24) [] {$2$};
\vertex(3) at (15,24) [] {$3$};
\vertex(4) at (-5,14) [] {$4$};
\vertex[fill=red!80](5) at (5,14) [] {$5$};
\vertex(6) at (15,14) [] {$6$};
\vertex(7) at (-5,4) [] {$7$};
\vertex(8) at (5,4) [] {$8$};
\vertex(9) at (15,4) [] {$9$};

\draw (1) -- (2);
\draw (2) -- (3);

\draw [red] (2) -- (4);
\draw [red] (2) -- (6);
\draw [red] (4) -- (8);
\draw [red] (6) -- (8);

\draw [red, dashed] (1) -- (5);
\draw [red, dashed] (3) -- (5);
\draw [red, dashed] (5) -- (7);
\draw [red, dashed] (5) -- (9);

\draw (1) -- (4);
\draw [dashed] (2) -- (5);
\draw (3) -- (6);

\draw [dashed] (4) -- (5);
\draw [dashed] (5) -- (6);

\draw (4) -- (7);
\draw [dashed] (5) -- (8);
\draw (6) -- (9);

\draw [red] (1) to [out=45, in=135, distance = 6cm] (3);
\draw [red] (7) to [out=-45, in=225, distance = 6cm] (9);
\draw [red] (1) to [out=225, in=135, distance = 6cm] (7);
\draw [red] (3) to [out=-45, in=45, distance = 6cm] (9);

\draw (7) -- (8);
\draw (8) -- (9);
\end{tikzpicture}
\caption{An optimal solution of the 2-DCNDP}
\label{subfig:b}
\end{subfigure}
\end{center}
\caption{Connections of lengths one and two are shown in black and red, respectively. The red vertex and dashed lines show the removed vertex and connections, respectively. 
(a) An optimal solution of the 1-DCNDP in which removing vertex 5 decreases the number of 1-hop connections from 12 to 8. (b) An optimal solution of the 2-DCNDP in which removing vertex 5 decreases the number of connections with length at most two from 24 to 16.
}
\label{DCNDP_example}
\end{figure}

A removed vertex can be interpreted as a vaccinated person. 
As the physical social distancing was considered one or two meters during the COVID-19 pandemic in Canada~\citep{globalnews2021}, minimizing the number of low-hop connections is a reasonable objective in the vaccination context.
As COVID-19 is a very contagious disease, its spread rate is very high when one is within a close distance of an infected individual, thus motivating the study of the low-hop variants of the DCNDP (e.g., 1-DCNDP and 2-DCNDP).

We conduct a polyhedral study for the 1-DCNDP and propose aggregated valid inequalities for the 2-DCNDP to solve high-density instances of the problem. 
Our experiments show that the aggregated inequalities outperform their disaggregated variant when the density of the input graph is at least $0.5 \%$. 
To our knowledge, this is the first work that studies the 1-DCNDP and 2-DCNDP from the lens of mixed integer programming.

The DCNDP solutions, while valuable for identifying critical nodes in complex networks, are limited when it comes to formulating practical vaccination programs and policies. 
One fundamental limitation is that these solutions consider only the topology of contact networks, but in reality, contact network topology is not known exactly. Further, vaccination priority policies need to be easily understood and communicated, while a DCNDP is a set of highly specific individual nodes which may not correspond to actual members of a population, or it may difficult to communicate to only particular selected individuals that they should get vaccinated. 
To address this issue, our networks are generated using a granular agent-based simulation (ABS) model of NL~\cite{aleman2020morpop}, which overlays individual characteristics and demographics on each node. We then use decision trees to post-process DCNDP solutions and to provide policymakers with managerial insights and simple, implementable rules describing vaccine policies.

This combined approach of optimization and machine learning on realistic network sizes can enhance the effectiveness of vaccination programs and ensure that limited resources are allocated efficiently, ultimately contributing to successfully pandemic mitigation. However, we emphasize that our DCNDP analysis considers \emph{only} reduction of disease spread, and does not consider other important public health factors such as disease outcome severity or vaccination equity.

\section{MIP Formulations of 1-DCNDP and 2-DCNDP}\label{section: mipModels}
Given graph $G = (V, E)$ with vertex set size of $n:=|V|$, edge set size of $m:=|E|$, edge weights $w_e \ge 0$ for every $e \in E$, deletion budget $b \ge 0$, vertex deletion cost $a_v \ge 0$ for every vertex $v \in V$, connection cost $c_{uv} \ge 0$ for every vertex pair $\{u,v\} \in \binom{V}{2}$, and distance threshold $k \ge 1$, the DCNDP seeks to find a subset of vertices $D \subseteq V$ satisfying the budget $\sum_{v \in D} a_v \le b$ minimizing $\sum_{\{u,v\} \in S} c_{uv}$ with $S := \{\{u,v\} \in \binom{V - D}{2}:\dist_{G[V-D]}(u,v) \le k\}$, $\binom{V - D}{2}$ be the set of all vertex pairs in the set $V-D$, and $\dist_{G[V-D]}(u,v)$ be the hop-based distance between $u$ and $v$ in the induced subgraph $G[V-D]$.
This definition of the DCNDP matches the definition of Class 1 of the DCNDP defined by~\citet{salemi2022} and~\citet{alozie2021distance}.

Both MIP models are specific cases of the \emph{thin} formulation of~\citet{salemi2022} when $k \in \{1,2\}$.
We conduct a polyhedral study and propose aggregated valid inequalities for the 1-DCNDP and 2-DCNDP, respectively.
%

\subsection{The 1-DCNDP MIP formulation}\label{section: 1-hop-connectivity}
The 1-DCNDP formulation minimizes the number of edges whose endpoints are not removed (vaccinated)~\citep{salemi2022}:
\begin{subequations}
\label{1-dcnp}
\begin{align}
\min~&  \sum_{e \in E} x_e\label{dcnp10}\\  
\text{s.t.}~& 1 - y_u - y_v \le x_{uv} && \forall \{u,v\} \in E \label{dcnp11}\\
&\sum_{v \in V} y_v \le b &&  \label{dcnp12}\\
&y_v\in \{0,1\} && \forall v \in V \label{dcnp13}\\
&x\in \mathbb{R}^m_+\label{dcnp14}
\end{align}
\end{subequations}
Objective function~\eqref{dcnp10} minimizes the residual edges (1-hop connections) in the input graph after removing a subset of nodes (i.e., vaccinating a subset of individuals).
Constraints~\eqref{dcnp11} imply that an edge (a 1-hop connection) remains in graph $G$ if none of its endpoints are removed (vaccinated).
Constraint~\eqref{dcnp12} imposes the vaccination budget.
We also note that the integrality of $x$ variables can be relaxed as constraints~\eqref{dcnp11} ensure their integrality. 

The polytope of the 1-DCNDP formulation is defined as follows:
\begin{align}\label{polytope1}
    \mathcal{P}_1 := \left\{x\in \mathbb{R}^m_+,~y \in [0,1]^{n} : (x,y) \text{ satisfies constraints~\eqref{dcnp11}-\eqref{dcnp12}} \right\}
\end{align}
We now conduct a polyhedral study to see which inequalities under what conditions are facet-defining in the MIP of the 1-DCNDP.
As most of these inequalities are facet-defining under mild conditions, one can observe the superiority of their computational performance over that of the 2-DCNDP.

\begin{proposition}
The polytope $\mathcal{P}_1$ is full-dimensional if $b \ge 1$. 
\end{proposition}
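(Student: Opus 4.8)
The plan is to show that $\mathcal{P}_1$ has nonempty interior in $\mathbb{R}^m \times \mathbb{R}^n$; since $\mathcal{P}_1$ is cut out entirely by inequalities, exhibiting a single point that satisfies every one of them strictly (a Slater point) immediately yields a full-dimensional ball around it, and hence $\dim \mathcal{P}_1 = m + n$. Equivalently, this rules out any nontrivial linear equation $\alpha^\top x + \beta^\top y = \gamma$ holding throughout $\mathcal{P}_1$, which is the defining property of full-dimensionality.

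Concretely, I would propose the point $\bar{x}_e = 1$ for every $e \in E$ together with $\bar{y}_v = \varepsilon$ for every $v \in V$, where $\varepsilon$ is any real number with $0 < \varepsilon < \min\{1, b/n\}$. This range is nonempty precisely because $b \ge 1 > 0$, which is exactly where the hypothesis is used: a positive budget leaves room to take all $y$-coordinates strictly positive while keeping $\sum_{v \in V} \bar{y}_v = n\varepsilon$ strictly below $b$.

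Next I would verify that each defining inequality is strict at $(\bar{x}, \bar{y})$. The nonnegativity and box bounds give $\bar{x}_e = 1 > 0$ and $0 < \bar{y}_v = \varepsilon < 1$; the budget constraint~\eqref{dcnp12} gives $\sum_{v \in V} \bar{y}_v = n\varepsilon < b$; and the coupling constraints~\eqref{dcnp11} give $1 - \bar{y}_u - \bar{y}_v = 1 - 2\varepsilon < 1 = \bar{x}_{uv}$ for every edge $\{u,v\} \in E$. Since there are only finitely many inequalities and all hold strictly, continuity guarantees an open ball around $(\bar{x}, \bar{y})$ that is contained in $\mathcal{P}_1$, which completes the argument.

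There is no serious obstacle here; the only point demanding care is the budget constraint, and it is precisely what forces the hypothesis $b \ge 1$. For contrast one could instead construct $m + n + 1$ affinely independent feasible points, but that route is more laborious, whereas the strict-feasibility argument is clean. It is also worth flagging (to preempt a referee) that the hypothesis is tight: when $b = 0$, the constraint $\sum_{v \in V} y_v \le 0$ together with $y \ge 0$ forces $y = 0$, so $\mathcal{P}_1$ lies inside the hyperplanes $y_v = 0$ and cannot be full-dimensional.
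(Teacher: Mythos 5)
Your proof is correct, but it takes a genuinely different route from the paper. The paper establishes full-dimensionality in the standard polyhedral-combinatorics style by exhibiting $m+n+1$ affinely independent points of $\mathcal{P}_1$ (namely $(\mathbf{1},\mathbf{0})$, the points $(\mathbf{1}-e_f, e_u)$ for each edge $f=\{u,v\}$, and the points $(\mathbf{1}, e_v)$ for each vertex), then verifying linear independence of the differences. You instead produce a single Slater point $(\bar{x},\bar{y})=(\mathbf{1},\varepsilon\mathbf{1})$ with $0<\varepsilon<\min\{1,b/n\}$ at which every defining inequality is strict, and conclude that $\mathcal{P}_1$ contains an open ball; since the polytope is an intersection of finitely many closed half-spaces with no implied equations at that point, this is a valid and complete argument, and your verification of strictness for each constraint class is accurate (the range for $\varepsilon$ is nonempty exactly because $b\ge 1$). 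Your route is shorter and has the added merit of showing the hypothesis is tight at $b=0$. What the paper's more laborious construction buys is reusable machinery: essentially the same families of points, perturbed to lie on the relevant hyperplanes, are what drive the subsequent facet proofs (Propositions~\ref{trivial_fd} and~\ref{non_trivial_fd}), where a strict-feasibility shortcut is not available and one genuinely needs $m+n$ affinely independent points on each face.
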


\begin{proof}
Full-dimensionality holds by the fact that the following $m+n+1$ points belong to polytope $\mathcal{P}_1$ and are affinely independent: (i) $(\mathbf{1},\mathbf{0})$ (ii) vector $(\mathbf{1} - e_f, e_u)$ for every edge $f=\{u,v\} \in E$, and (iii) vector $(\mathbf{1}, e_v)$ for every vertex $v \in V$. 
In total, we have $n+m+1$ affinely independent points that belong to $\mathcal{P}_1$.
See Table~\ref{table-aff1} for an illustration of the affinely independent points. 
\begin{table}[tbp] 
\centering
\caption{$m+n+1$ affinely independent points that belong to polytope~\eqref{polytope1}.
}
\begin{tabular}{l|c|c|c}
  & 1 point & $m$ points & $n$ points \\ \hline
$\mathbf{x}$ & $\mathbf{1}_{m \times 1}$ & $\mathbf{1}_{m \times m} - \mathbf{I}_{m \times m}$ & $\mathbf{1}_{m \times n}$ \\ \hline
$\mathbf{y}$ & $\mathbf{0}_{m \times 1}$ & $e_1, \dots, e_m$ & $\mathbf{I}_{m \times n}$              
\end{tabular}
\label{table-aff1}
\end{table}

Furthermore, Table~\ref{table-lin1} shows $n+m$ linearly independent points obtained by subtracting point $(\mathbf{1}_{m \times 1}, \mathbf{0}_{m \times 1})$ (the first column of Table~\ref{table-aff1}) from the last $m+n$ points of Table~\ref{table-aff1}.

\begin{table}[tbp] 
\centering
\caption{$m+n$ linearly independent points that are obtained by subtracting point $(\mathbf{1}_{m \times 1}, \mathbf{0}_{m \times 1})$ (the first column of Table~\ref{table-aff1}) from the last $m+n$ points of Table~\ref{table-aff1}.
}
\begin{tabular}{l|c|c}
  & $m$ points & $n$ points \\ \hline
$\mathbf{x}$ & $- \mathbf{I}_{m \times m}$ & $\mathbf{0}_{m \times n}$ \\ \hline
$\mathbf{y}$ & $e_1, \dots, e_m$ & $\mathbf{I}_{m \times n}$              
\end{tabular}
\label{table-lin1}
\end{table}
\end{proof}
Proposition~\ref{trivial_fd} shows that trivial constraints of the MIP formulation of the 1-DCNDP are facet-defining under reasonable conditions.
\begin{proposition}\label{trivial_fd}
The following trivial constraints are facet-defining:
\begin{enumerate}
    \item $x_{uv} \ge 0$ for every edge $\{u,v\} \in E$ if $b \ge 2$
    \item $y_v \ge 0$ for every vertex $v \in V$ if $b \ge 1$
    \item $y_v \le 1$ for every vertex $v \in V$ if $b \ge 2$
\end{enumerate}
\end{proposition}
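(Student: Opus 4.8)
The plan is to use the same facet criterion as in the preceding propositions. Since $\mathcal{P}_1$ is full-dimensional whenever $b \ge 1$, an inequality $\pi^\top(x,y) \le \pi_0$ (or $\ge$) is facet-defining exactly when its induced face contains $m+n$ affinely independent points of $\mathcal{P}_1$. For each of the three trivial constraints I would fix a convenient base point on the face and then exhibit $m+n-1$ further feasible points whose differences from the base are linearly independent, arranging them so that the difference vectors recover every free coordinate direction (the unit vectors $e_{x_f}$ along the relevant edge coordinates and $e_{y_w}$ along the relevant vertex coordinates). The budget constraint~\eqref{dcnp12} is what dictates the hypotheses $b \ge 1$ versus $b \ge 2$, so the bookkeeping of how many vertices each candidate point deletes is the crux of the argument.

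For item 2 ($y_v \ge 0$) I would take the base point $(\mathbf{1},\mathbf{0})$, which lies on the face $y_v = 0$. For each vertex $w \ne v$ I add the point $(\mathbf{1}, e_w)$, producing the pure directions $e_{y_w}$; for each edge $f = \{p,q\}$ I add a point that deletes one endpoint other than $v$ (possible since $f$ has at most one endpoint equal to $v$) and sets $x_f = 0$, yielding a difference of the form $-e_{x_f} + e_{y_p}$. Because every $e_{y_w}$ with $w \ne v$ is already spanned, these recover all the pure directions $e_{x_f}$, and the resulting $m+n$ points are affinely independent. Each point deletes at most one vertex, so $b \ge 1$ suffices here.

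For item 1 ($x_{uv} \ge 0$, with $e = \{u,v\}$) the face forces $y_u + y_v \ge 1$, so every point must already spend one unit of budget deleting an endpoint of $e$. I would take as base the point with $y_u = 1$, $x_e = 0$, and all other $x_f = 1$, $y_w = 0$. Deleting $u$ alone zeroes $x_e$ and also lets me set $x_f = 0$ for edges incident to $u$; but for an edge $f$ not meeting $u$, and for activating each $y_w$ with $w \ne u$, I must delete a second vertex, which is precisely where $b \ge 2$ enters. The corresponding differences are $-e_{x_f}$, $-e_{x_f} + e_{y_p}$, and $e_{y_w}$, and a single extra point that deletes $v$ instead of $u$ supplies the missing $e_{y_u}$ direction. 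Item 3 ($y_v \le 1$) is handled symmetrically: the base deletes $v$ (so $y_v = 1$), and a second deletion is needed to reach each $y_w$ with $w \ne v$ and each $x_f$ with $v \notin f$, again forcing $b \ge 2$.

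The main obstacle I anticipate is not the feasibility of the individual points---each is straightforward to certify against~\eqref{dcnp11} and~\eqref{dcnp12}---but the verification of affine independence under the budget ceiling. The subtlety is that, away from the distinguished vertex, I cannot obtain a pure $e_{x_f}$ direction without simultaneously deleting a vertex, so the natural difference vectors are the mixed ones $-e_{x_f} + e_{y_p}$. The argument must therefore proceed in the right order: first establish that all vertex directions $e_{y_w}$ lie in the span, and only then peel them off the mixed vectors to recover the pure edge directions $e_{x_f}$. Keeping track of which deletion is ``charged'' to which coordinate, so that exactly $m+n$ affinely independent points emerge while never exceeding the budget, is the one place where the hypotheses $b \ge 2$ (items 1 and 3) and $b \ge 1$ (item 2) are genuinely used.
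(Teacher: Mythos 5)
Your proposal is correct and follows essentially the same route as the paper: for each trivial inequality it fixes a base point on the face and exhibits $m+n-1$ feasible difference vectors spanning all coordinate directions, with the budget accounting (one deletion for $y_v\ge 0$, two for $x_{uv}\ge 0$ and $y_v\le 1$) yielding exactly the stated hypotheses. The paper simply records the same families of points in tabular form (Tables~\ref{table_aff_trivial_ineq_x}--\ref{table_linIndep_atMostOne_ineq_y}), including your triangular ``span the $e_{y_w}$ first, then peel them off the mixed vectors'' verification.
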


\begin{proof}
To prove the first statement, choose edge $\{u,v\} \in E$.
Then, Table~\ref{table_aff_trivial_ineq_x} provides $m+n$ affinely independent points that satisfy the inequality $x_{uv} \ge 0$ at equality.
\begin{table}[tbp] 
\centering
\caption{$m+n$ affinely independent points that satisfy inequality $x_{uv} \ge 0$ at equality.
}
\begin{tabular}{l|c|c|c}
  & 3 points & $n-2$ points & $m-1$ points \\ \hline 
$x_{uv}$ & $\mathbf{0}_{1 \times 3}$ & $\mathbf{0}_{1 \times (n-2)}$ & $\mathbf{0}_{1 \times (m-1)}$ \\ \hline
$\mathbf{x}_{(m-1) \times 1}$ & $\mathbf{1}_{(m-1) \times 3}$ & $\mathbf{1}_{(m-1) \times (n-2)}$ & $(\mathbf{1} - \mathbf{I})_{(m-1) \times (m-1)}$ \\ \hline
$y_u$ & $(0,1,1)$ & $\mathbf{1}_{1 \times (n-2)}$ & $\mathbf{1}_{1 \times (m-1)}$ \\ 
$y_v$ & $(1,1,0)$ & $\mathbf{0}_{1 \times (n-2)}$ & $\mathbf{0}_{1 \times (m-1)}$ \\ \hline
$\mathbf{y}_{(n-2) \times 1}$ & $\mathbf{0}_{(n-2) \times 3}$ & $\mathbf{I}_{(n-2) \times (n-2)}$ & $e_1, \dots, e_{m-1}$   
\end{tabular}
\label{table_aff_trivial_ineq_x}
\end{table}

Table~\ref{table_linIndep_trivial_ineq_x} shows $m+n-1$ linearly independent points that are obtained by subtracting the third column of Table~\ref{table_aff_trivial_ineq_x} from other columns. 

\begin{table}[tbp] 
\centering
\caption{$m+n-1$ linearly independent points that are obtained by subtracting the third column of Table~\ref{table_aff_trivial_ineq_x} from other columns.
}
\begin{tabular}{l|c|c|c}
 & 2 points & $n-2$ points & $m-1$ points \\ \hline
 $x_{uv}$ & $\mathbf{0}_{1 \times 2}$ & $\mathbf{0}_{1 \times (n-2)}$ & $\mathbf{0}_{1 \times (m-1)}$ \\ \hline
$\mathbf{x}_{(m-1) \times 1}$ & $\mathbf{0}_{(m-1) \times 2}$ & $\mathbf{0}_{(m-1) \times (n-2)}$ & $- \mathbf{I}_{(m-1) \times (m-1)}$ \\ \hline
$y_u$ & $(-1,0)$ & $\mathbf{0}_{1 \times (n-2)}$ & $\mathbf{0}_{1 \times (m-1)}$ \\ 
$y_v$ & $(1,1)$ & $\mathbf{0}_{1 \times (n-2)}$ & $\mathbf{0}_{1 \times (m-1)}$ \\ \hline
$\mathbf{y}_{(n-2) \times 1}$ & $\mathbf{0}_{(n-2) \times 2}$ & $\mathbf{I}_{(n-2) \times (n-2)}$ & $e_1, \dots, e_{m-1}$   
\end{tabular}
\label{table_linIndep_trivial_ineq_x}
\end{table}

To prove the second statement, choose vertex $v \in V$. 
Then, Table~\ref{table_aff_nonnegativity_ineq_y} provides $m+n$ affinely independent points that satisfy inequality $y_v \ge 0$ at equality when $b \ge 1$.

\begin{table}[tbp] 
\centering
\caption{$m+n$ affinely independent points that satisfy inequality $y_{v} \ge 0$ at equality.
}
\begin{tabular}{l|c|c|c}
 & $m$ points & $n-1$ points & 1 point \\ \hline 
$\mathbf{x}_{m \times 1}$ & $\mathbf{(1-I)}_{m \times m}$ & $\mathbf{1}_{m \times (n-1)}$ & 1 \\ \hline
$y_v$ & $\mathbf{0}_{1 \times m}$ & $\mathbf{0}_{1 \times (n-1)}$ & 0 \\ \hline
$\mathbf{y}_{(n-1) \times 1}$ & $e_1, \dots, e_m$ & $\mathbf{I}_{(n-1) \times (n-1)}$ & $\mathbf{0}_{(n-1) \times 1}$   
\end{tabular}
\label{table_aff_nonnegativity_ineq_y}
\end{table}

Furthermore, Table~\ref{table_linIndep_nonnegativity_ineq_y} shows $m+n-1$ linearly independent points that are obtained by subtracting the last column of Table~\ref{table_aff_nonnegativity_ineq_y} from other columns.

\begin{table}[tbp] 
\centering
\caption{$m+n-1$ linearly independent points that are obtained by subtracting the last column of Table~\ref{table_aff_nonnegativity_ineq_y} from other columns.
}
\begin{tabular}{l|c|c}
 & $m$ points & $n-1$ points \\ \hline
$\mathbf{x}_{m \times 1}$ & $\mathbf{-I}_{m \times m}$ & $\mathbf{0}_{m \times (n-1)}$ \\ \hline
$y_v$ & $\mathbf{0}_{1 \times m}$ & $\mathbf{0}_{1 \times (n-1)}$ \\ \hline
$\mathbf{y}_{(n-1) \times 1}$ & $e_1, \dots, e_m$ & $\mathbf{I}_{(n-1) \times (n-1)}$  
\end{tabular}
\label{table_linIndep_nonnegativity_ineq_y}
\end{table}

To prove the last statement, choose vertex $v \in V$.
Then, Table~\ref{table_aff_atMostOne_ineq_y} provides $m+n$ affinely independent points that satisfy inequality $y_v \le 1$ at equality when $b \ge 2$.

\begin{table}[tbp] 
\centering
\caption{$m+n$ affinely independent points that satisfy inequality $y_{v} \le 1$ at equality.
}
\begin{tabular}{l|c|c|c}
 & $m$ points & $n-1$ points & 1 point \\ \hline
$\mathbf{x}_{m \times 1}$ & $\mathbf{(1-I)}_{m \times m}$ & $\mathbf{1}_{m \times (n-1)}$ & 1 \\ \hline
$y_v$ & $\mathbf{1}_{1 \times m}$ & $\mathbf{1}_{1 \times (n-1)}$ & 1 \\ \hline
$\mathbf{y}_{(n-1) \times 1}$ & $e_1, \dots, e_m$ & $\mathbf{I}_{(n-1) \times (n-1)}$ & $\mathbf{0}_{(n-1) \times 1}$   
\end{tabular}
\label{table_aff_atMostOne_ineq_y}
\end{table}

Furthermore, Table~\ref{table_linIndep_atMostOne_ineq_y} shows $m+n-1$ linearly independent points that are obtained by subtracting the last column of Table~\ref{table_aff_atMostOne_ineq_y} from other columns.

\begin{table}[tbp] 
\centering
\caption{$m+n-1$ linearly independent points that are obtained by subtracting the last column of Table~\ref{table_aff_atMostOne_ineq_y} from other columns.
}
\begin{tabular}{l|c|c}
 & $m$ points & $n-1$ points \\ \hline
$\mathbf{x}_{m \times 1}$ & $\mathbf{-I}_{m \times m}$ & $\mathbf{0}_{m \times (n-1)}$ \\ \hline
$y_v$ & $\mathbf{0}_{1 \times m}$ & $\mathbf{0}_{1 \times (n-1)}$ \\ \hline
$\mathbf{y}_{(n-1) \times 1}$ & $e_1, \dots, e_m$ & $\mathbf{I}_{(n-1) \times (n-1)}$  
\end{tabular}
\label{table_linIndep_atMostOne_ineq_y}
\end{table}

\end{proof}
Proposition~\ref{non_trivial_fd} shows that inequalities~\eqref{dcnp11} are facet-defining under a reasonable condition. 
\begin{proposition}\label{non_trivial_fd}
Inequalities~\eqref{dcnp11} are facet-defining if $b \ge 1$.
\end{proposition}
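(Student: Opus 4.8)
My plan is to use the standard indirect (equation-counting) characterization of facets for a full-dimensional polyhedron, rather than exhibiting $m+n$ explicit affinely independent points as in the preceding propositions. Since the full-dimensionality result established above guarantees that $\mathcal{P}_1$ is full-dimensional when $b \ge 1$, the face $F$ induced by inequality \eqref{dcnp11} for a fixed edge $f=\{u,v\}$, namely $F = \{(x,y)\in\mathcal{P}_1 : x_{uv}+y_u+y_v = 1\}$, is a facet if and only if every linear equation $\sum_{e\in E} a_e x_e + \sum_{w\in V} c_w y_w = d$ that holds on all of $F$ is a scalar multiple of $x_{uv}+y_u+y_v=1$. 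So I would fix one such equation and pin down its coefficients by evaluating it at a small family of easily verified feasible points of $F$.

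First I would record that \eqref{dcnp11} is valid and that $F$ is nonempty: the base point $P_0$ with $y=\mathbf{0}$ and $x=\mathbf{1}$ lies in $\mathcal{P}_1$ (each constraint \eqref{dcnp11} reads $1\le 1$ and the budget reads $0\le b$) and satisfies $x_{uv}+y_u+y_v=1$. The structural feature I would exploit is that the $x$ variables are bounded only below (they range over $\mathbb{R}^m_+$), so I may increase any single coordinate $x_e$ without leaving the polyhedron.

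The coefficients are then determined in three steps. (i) For every edge $e\neq f$, I compare $P_0$ with the point obtained from $P_0$ by increasing $x_e$; both lie in $F$ and differ only in $x_e$, forcing $a_e=0$. (ii) For every vertex $w\notin\{u,v\}$, I compare $P_0$ with the point obtained by setting $y_w=1$ (feasible precisely because $b\ge 1$; this only relaxes the constraints \eqref{dcnp11} and leaves $x_{uv},y_u,y_v$ unchanged); the two points lie in $F$ and differ only in $y_w$, forcing $c_w=0$. After (i)--(ii) the equation collapses to $a_{uv}x_{uv}+c_u y_u + c_v y_v = d$. (iii) Evaluating this at $P_0$ with $(x_{uv},y_u,y_v)=(1,0,0)$, at the point $P_u$ with $(0,1,0)$, and at the point $P_v$ with $(0,0,1)$ --- each obtained from $P_0$ by deleting a single endpoint of $f$, hence feasible for $b\ge 1$ and lying on $F$ --- yields $a_{uv}=d$, $c_u=d$, and $c_v=d$. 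Thus the coefficient vector equals $d$ times that of the defining inequality, so every equation valid on $F$ is a scalar multiple of $x_{uv}+y_u+y_v=1$, and the facet characterization for full-dimensional polyhedra gives the claim.

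The routine content is the feasibility check of the handful of points $P_0,P_u,P_v$ and the single-coordinate perturbations, each immediate from constraints \eqref{dcnp11} and the budget; the hypothesis $b\ge 1$ is used only in steps (ii) and (iii), where one vertex is deleted. The one genuine subtlety --- and what I regard as the crux of the argument --- is step (ii): to isolate $c_w$ I must keep $y_u,y_v$ (and hence $x_{uv}$) fixed, so the perturbed point may raise to $1$ only a vertex distinct from both endpoints of $f$. This is exactly why the endpoint coefficients $c_u,c_v$ must be separated out and handled by the endpoint-deletion points of step (iii), rather than by the generic perturbation of step (ii).
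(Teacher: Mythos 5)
Your argument is correct, and it reaches the conclusion by a genuinely different route than the paper. The paper uses the direct method: it exhibits $m+n$ explicit affinely independent points of $\mathcal{P}_1$ satisfying $1-y_u-y_v = x_{uv}$ at equality (tabulated, then verified by differencing against a base point). You instead use the indirect, equation-counting characterization: assuming an arbitrary equation $\sum_e a_e x_e + \sum_w c_w y_w = d$ valid on the face, you force $a_e=0$ for $e\neq f$ by pushing a single $x_e$ upward (legitimate here since $x$ has no upper bound in $\mathcal{P}_1$), force $c_w=0$ for $w\notin\{u,v\}$ by toggling $y_w$ (this is where $b\ge 1$ enters, as you note), and pin down $a_{uv}=c_u=c_v=d$ with the three points $(x_{uv},y_u,y_v)\in\{(1,0,0),(0,1,0),(0,0,1)\}$ --- the same three ``corner'' points that head the paper's table. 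The two proofs thus rest on essentially the same feasible points; the difference is bookkeeping. Your version avoids verifying linear independence of an $(m+n-1)$-point family and isolates cleanly where each hypothesis is used, at the cost of invoking the facet characterization for full-dimensional polyhedra (which is available, since the preceding proposition establishes full-dimensionality for $b\ge 1$). One cosmetic caveat: the paper calls $\mathcal{P}_1$ a polytope, but it is an unbounded polyhedron (no upper bounds on $x$); your upward perturbation in step (i) exploits exactly this, and the indirect characterization applies to full-dimensional polyhedra, so nothing breaks. You could equally well have used the paper's bounded perturbation (drop $x_e$ to $0$ while setting $y$ of one endpoint of $e$ to $1$), which also fixes $a_e=0$ once the $c_w$ are known to vanish.
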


\begin{proof}
Choose edge $\{u,v\} \in E$.
Then, Table~\ref{table_aff_1b_ineq} provides $m+n$ affinely independent points that satisfy inequality $1 - y_u - y_v \le x_{uv}$ at equality.

\begin{table}[tbp] 
\centering
\caption{$m+n$ affinely independent points satisfying inequality $1 - y_u -y_{v} \le x_{uv}$ at equality.
}
\begin{tabular}{l|c|c|c}
 & 3 points & $m-1$ points  & $n-2$ points \\\hline
$y_u$ & $(0,1,0)$ & $\mathbf{0}_{1 \times (m-1)}$  & $\mathbf{0}_{1 \times (n-2)}$ \\ 
$y_v$ & $(0,0,1)$ & $\mathbf{0}_{1 \times (m-1)}$  & $\mathbf{0}_{1 \times (n-2)}$ \\
$x_{uv}$ & $(1,0,0)$ & $\mathbf{1}_{1 \times (m-1)}$  & $\mathbf{1}_{1 \times (n-2)}$ \\\hline
$\mathbf{x}_R$ & $\mathbf{1}_{(m-1) \times 3}$ & $\mathbf{1 - I}_{(m-1) \times (m-1)}$ & $\mathbf{1}_{(m-1) \times (n-2)}$\\\hline
$\mathbf{y}_R$ & $\mathbf{0}_{(n-2) \times 3}$ & $e_1, e_2, \dots, e_{m-1}$ & $\mathbf{I}_{(n-2) \times (n-2)}$
\end{tabular}
\label{table_aff_1b_ineq}
\end{table}

Furthermore, Table~\eqref{table_linIndep_1b_ineq} shows $m+n-1$ linearly independent points that are obtained by subtracting the first point of Table~\ref{table_aff_1b_ineq} from others.

\begin{table}[tbp] 
\centering
\caption{$m+n-1$ linearly independent points that are obtained by subtracting the first point of Table~\ref{table_aff_1b_ineq} from others.
}
\begin{tabular}{l|c|c|c}
 & 2 points & $m-1$ points & $n-2$ points \\\hline
$y_u$ & $(1,0)$ & $\mathbf{0}_{1 \times (m-1)}$  & $\mathbf{0}_{1 \times (n-2)}$ \\ 
$y_v$ & $(0,1)$ & $\mathbf{0}_{1 \times (m-1)}$  & $\mathbf{0}_{1 \times (n-2)}$ \\
$x_{uv}$ & $(-1,-1)$ & $\mathbf{0}_{1 \times (m-1)}$  & $\mathbf{0}_{1 \times (n-2)}$ \\\hline
$\mathbf{x}_R$ & $\mathbf{0}_{(m-1) \times 2}$ & $\mathbf{-I}_{(m-1) \times (m-1)}$ & $\mathbf{0}_{(m-1) \times (n-2)}$\\\hline
$\mathbf{y}_R$ & $\mathbf{0}_{(n-2) \times 2}$ & $e_1, e_2, \dots, e_{m-1}$ & $\mathbf{I}_{(n-2) \times (n-2)}$
\end{tabular}
\label{table_linIndep_1b_ineq}
\end{table}

\end{proof}

\subsection{The 2-DCNDP MIP formulation}\label{section: 2-hop-connectivity}
In this section, we study the MIP formulation of the 2-DCNDP and propose valid aggregated inequalities to handle high-density instances (i.e., graphs with at least $0.5 \%$ of density in our case study) of the problem.
We first define the edge squared set as follows:
\begin{align*}
    E^2 = \{\{u,v\} \in \binom{V}{2}: \dist_G(u,v) \le 2\}.
\end{align*}
For every vertex $v \in V$, we also define $N_G(v)$ as the neighbor set of vertex $v$.
Then, the 2-DCNDP MIP formulation is provided as follows~\cite{salemi2022}:
\begin{subequations}
\label{2-dcnp}
\begin{align}
\min~&  \sum_{e \in E^2} x_e\label{dcnp00}\\ 
\text{s.t.}~&1 - y_u - y_v - y_i \le x_{uv} &&~\forall\{u,v\} \in E^2 \setminus E,~\forall i \in N_G(u) \cap N_G(v) \label{dcnp01}\\
& 1 - y_u - y_v \le x_{uv} &&~\forall \{u,v\} \in E \label{dcnp02}\\
&\sum_{v \in V} y_v \le b &&  \label{dcnp03}\\
&y_v\in \{0,1\} && \forall v \in V \label{dcnp04}\\
&x_e\in \mathbb{R}_+ && \forall e \in E^2\label{dcnp05}
\end{align}
\end{subequations}
Objective function~\eqref{dcnp00} minimizes the number of connections of length at most two.
Constraints~\eqref{dcnp01}-\eqref{dcnp02} imply that if no connection of length at most two exists between vertices $u$ and $v$, then either (i) $u$ or $v$ or (ii) all of the common neighbors of them are removed (vaccinated). 
Constraint~\eqref{dcnp03} impose the budget constraint.
We note that the integrality of $x$ variables can be relaxed as their integrality is implied by constraints~\eqref{dcnp01}-\eqref{dcnp02}. 

To handle high-density instances and to decrease the number of constraints of type~\eqref{dcnp01}, we propose the following set of inequalities that are aggregated over the common neighbors of $\{u,v\} \in E^2 \setminus E$:
\begin{align}\label{aggregated_ineq}
    &\frac{\sum_{i \in N_G(u) \cap N_G(v)} (1-y_i)}{|N_G(u) \cap N_G(v)|} - y_u - y_v \le x_{uv} &&~\forall\{u,v\} \in E^2 \setminus E
\end{align} 
This aggregation decreases the number of constraints from $O(n^3)$ to $O(n^2)$.
The following lemma shows that constraints~\eqref{aggregated_ineq} are valid.
\begin{lemma}\label{validity}
    Aggregated inequalities~\eqref{aggregated_ineq} are valid.
\end{lemma}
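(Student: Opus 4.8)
The plan is to show that, for each pair $\{u,v\} \in E^2 \setminus E$, the aggregated inequality~\eqref{aggregated_ineq} is nothing more than the arithmetic average of the disaggregated inequalities~\eqref{dcnp01} taken over the common neighbors of $u$ and $v$. Validity then follows immediately, since any nonnegative combination of valid inequalities is again valid.

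First I would fix an arbitrary pair $\{u,v\} \in E^2 \setminus E$ and set $C := N_G(u) \cap N_G(v)$ and $t := |C|$. Because $\{u,v\} \notin E$ while $\dist_G(u,v) \le 2$, the distance is exactly two, so $u$ and $v$ share at least one common neighbor and hence $t \ge 1$. This guarantees that the denominator in~\eqref{aggregated_ineq} is well defined and that the family~\eqref{dcnp01} for this pair is nonempty.

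Next, take any point $(x,y)$ feasible for~\eqref{2-dcnp}. Constraints~\eqref{dcnp01} give $x_{uv} \ge 1 - y_u - y_v - y_i$ for every $i \in C$. Averaging these $t$ inequalities (equivalently, forming the nonnegative combination with weight $1/t$ on each) yields $x_{uv} \ge \frac{1}{t}\sum_{i \in C}(1 - y_u - y_v - y_i)$. The crux is then the algebraic simplification of the right-hand side: since $\sum_{i \in C}(y_u + y_v) = t(y_u+y_v)$, one obtains $\frac{1}{t}\sum_{i\in C}(1 - y_u - y_v - y_i) = \frac{\sum_{i\in C}(1-y_i)}{t} - y_u - y_v$, which is exactly the left-hand side of~\eqref{aggregated_ineq}. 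Hence every feasible point satisfies~\eqref{aggregated_ineq}, establishing validity.

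I do not expect a genuine obstacle here; once the identity between the aggregated and disaggregated left-hand sides is recognized, the argument reduces to a one-line averaging. The only mildly subtle point is the well-definedness of the average, i.e. confirming $C \neq \emptyset$, which is precisely why the restriction $\{u,v\} \in E^2 \setminus E$ (distance exactly two) is invoked. As an independent sanity check, one could instead verify~\eqref{aggregated_ineq} directly by cases: if $y_u + y_v \ge 1$ the right-hand side is at most $0 \le x_{uv}$; if $y_u = y_v = 0$ and some common neighbor is not removed then~\eqref{dcnp01} forces $x_{uv} \ge 1$ while the aggregated right-hand side is at most $1$; and if $y_u = y_v = 0$ with all common neighbors removed the right-hand side equals $0$.
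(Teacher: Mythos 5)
Your proof is correct, but it takes a genuinely different route from the paper. The paper proves validity by a direct case analysis on an integer feasible solution $(\hat{x},\hat{y})$: if $\hat{x}_{uv}=1$ or one of $\hat{y}_u,\hat{y}_v$ equals $1$ the inequality holds trivially, and if $\hat{x}_{uv}=\hat{y}_u=\hat{y}_v=0$ then feasibility forces $\hat{y}_i=1$ for every common neighbor $i$ — essentially the argument you relegate to a ``sanity check'' at the end. Your main argument instead exhibits~\eqref{aggregated_ineq} as the arithmetic mean (a nonnegative combination with weights $1/|N_G(u)\cap N_G(v)|$) of the disaggregated inequalities~\eqref{dcnp01}, after checking that the common-neighbor set is nonempty because $\dist_G(u,v)=2$. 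This buys something the paper's proof does not: it shows the aggregated inequalities are satisfied by \emph{every} point of the LP relaxation of the disaggregated formulation, not just by integer feasible points, which makes explicit that the aggregated relaxation is at most as tight as the disaggregated one. The paper's case analysis, on the other hand, is self-contained and structurally parallel to the proof of Lemma~\ref{implied_ineq}, which handles the converse direction for integer points. Both arguments are sound; yours is arguably the more informative derivation.
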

\begin{proof}
    Let $(\hat{x}, \hat{y})$ be a feasible solution of the 2-DCNDP. For every vertex pair $\{u,v\} \in E^2 \setminus E$, we consider the following cases:
    \begin{itemize}
        \item if $\hat{x}_{uv} = 1$, then inequality~\eqref{aggregated_ineq} is satisfied trivially.
        \item if $\hat{x}_{uv} = 0$, then we have either of the following cases:
        \begin{itemize}
            \item if either $\hat{y}_u = 1$ or $\hat{y}_v = 1$, then inequality~\eqref{aggregated_ineq} is satisfied trivially.
            \item if $\hat{y}_u = 0$ and $\hat{y}_v = 0$, then $\hat{y}_i = 1$ for every common neighbor $i \in N_G(u) \cap N_G(v)$ by the feasibility of $(\hat{x}, \hat{y})$.
        \end{itemize}
    \end{itemize}
\end{proof}

Now we prove that for any \emph{integer feasible solution}, the aggregated inequalities~\eqref{aggregated_ineq} imply inequalities~\eqref{dcnp01}.

\begin{lemma}\label{implied_ineq}
    Let $(\hat{x}, \hat{y})$ be an integer feasible solution that satisfies the aggregated inequalities~\eqref{aggregated_ineq}.
    Then, $(\hat{x}, \hat{y})$ satisfies inequalities~\eqref{dcnp01}.
\end{lemma}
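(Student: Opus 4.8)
The plan is to fix an arbitrary pair $\{u,v\} \in E^2 \setminus E$ together with a common neighbor $i \in N_G(u) \cap N_G(v)$, and to verify the single corresponding disaggregated inequality \eqref{dcnp01} for $(\hat x, \hat y)$ by a short case analysis on the integer values $\hat y_u, \hat y_v, \hat y_i \in \{0,1\}$. First I would dispose of the trivial cases. If $\hat y_u = 1$ or $\hat y_v = 1$, then the left-hand side $1 - \hat y_u - \hat y_v - \hat y_i$ is at most $0$, and since $\hat x_{uv} \ge 0$ the inequality holds; the same reasoning settles the case $\hat y_i = 1$. Thus the only case that requires genuine work is $\hat y_u = \hat y_v = \hat y_i = 0$, in which \eqref{dcnp01} reduces to the claim $\hat x_{uv} \ge 1$.

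For this remaining case I would invoke the aggregated inequality \eqref{aggregated_ineq}. Since $\hat y_u = \hat y_v = 0$, it specializes to
\[
\frac{\sum_{j \in N_G(u) \cap N_G(v)} (1 - \hat y_j)}{|N_G(u) \cap N_G(v)|} \le \hat x_{uv}.
\]
The index $i$ lies in the common-neighbor set and satisfies $\hat y_i = 0$, so the term $1 - \hat y_i = 1$ appears in the numerator; because $\hat y_j \le 1$ makes every summand nonnegative, the numerator is at least $1$ and the fraction is therefore strictly positive. Consequently $\hat x_{uv} > 0$.

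The crux of the argument — and the one place where the hypothesis of \emph{integrality} is indispensable — is the final step: the aggregated inequality by itself yields only a fractional lower bound on $\hat x_{uv}$ that can be far smaller than $1$ (indeed as small as $1/|N_G(u) \cap N_G(v)|$ when exactly one common neighbor is unremoved), so it cannot directly deliver $\hat x_{uv} \ge 1$. Because $(\hat x, \hat y)$ is an integer feasible solution, $\hat x_{uv}$ is a nonnegative integer, and $\hat x_{uv} > 0$ forces $\hat x_{uv} \ge 1$, which is exactly the bound needed to close the nontrivial case. Since $\{u,v\}$ and $i$ were arbitrary, this establishes \eqref{dcnp01} for all admissible choices. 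I expect the main obstacle to be conceptual rather than computational: one must recognize that the weak fractional bound coming from aggregation is promoted to the required unit bound precisely by integrality. This also clarifies why Lemma~\ref{implied_ineq}, unlike the validity statement in Lemma~\ref{validity}, must be restricted to integer feasible points, as the implication genuinely fails for fractional $\hat x$.
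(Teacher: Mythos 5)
Your proof is correct and takes essentially the same route as the paper's: a short case analysis on the integer values in which the aggregated inequality~\eqref{aggregated_ineq} is invoked only in the single nontrivial configuration. The paper cases on $\hat{x}_{uv}\in\{0,1\}$ and deduces that every common neighbor must be removed when $\hat{x}_{uv}=\hat{y}_u=\hat{y}_v=0$, whereas you case on $\hat{y}_u,\hat{y}_v,\hat{y}_i$ and deduce $\hat{x}_{uv}\ge 1$ from the positive fractional bound plus integrality of $\hat{x}_{uv}$ --- mirror images of the same argument, and your closing remark correctly identifies that the integrality hypothesis (on $x$ as well as $y$) is exactly what makes the implication hold.
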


\begin{proof}
    For every vertex pair $\{u,v\} \in E^2 \setminus E$, we consider the following cases:
    \begin{itemize}
        \item if $\hat{x}_{uv} = 1$, then inequality~\eqref{dcnp01} is satisfied trivially.
        \item if $\hat{x}_{uv} = 0$, then we have either of the following cases:
        \begin{itemize}
            \item if either $\hat{y}_u = 1$ or $\hat{y}_v = 1$, then inequality~\eqref{dcnp01} is satisfied trivially.
            \item if $\hat{y}_u = 0$ and $\hat{y}_v = 0$, then $\hat{y}_i = 1$ for every common neighbor $i \in N_G(u) \cap N_G(v)$ by the feasibility of $(\hat{x}, \hat{y})$.
        \end{itemize}
    \end{itemize} 
\end{proof}

The following theorem shows that one can replace Constraints~\eqref{dcnp01} with constraints~\eqref{aggregated_ineq}.

\begin{theorem}
    Constraints~\eqref{dcnp01} can be replaced with constraints~\eqref{aggregated_ineq}.
\end{theorem}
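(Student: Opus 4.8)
The plan is to show that the original 2-DCNDP model, with constraints~\eqref{dcnp01}, and the modified model, in which~\eqref{dcnp01} is replaced by the aggregated inequalities~\eqref{aggregated_ineq}, have identical sets of integer feasible solutions. Since both formulations share the objective~\eqref{dcnp00} and the remaining constraints~\eqref{dcnp02}--\eqref{dcnp05}, once the integer feasible regions are shown to coincide the two models are equivalent, and hence one set of constraints can be swapped for the other without changing the optimal value or the set of optimal solutions. Concretely, I would denote by $F_{01}$ the set of integer points $(\hat x,\hat y)$ satisfying~\eqref{dcnp01}--\eqref{dcnp05} and by $F_{\mathrm{agg}}$ the set of integer points satisfying~\eqref{aggregated_ineq} together with~\eqref{dcnp02}--\eqref{dcnp05}, reducing the claim to the set equality $F_{01}=F_{\mathrm{agg}}$.

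I would then establish the two inclusions separately, each as a direct appeal to a lemma already proved. For $F_{01}\subseteq F_{\mathrm{agg}}$, Lemma~\ref{validity} guarantees that every feasible solution of the 2-DCNDP satisfies the aggregated inequalities~\eqref{aggregated_ineq}; because the constraints~\eqref{dcnp02}--\eqref{dcnp05} are common to both formulations, any point of $F_{01}$ automatically lies in $F_{\mathrm{agg}}$. For the reverse inclusion $F_{\mathrm{agg}}\subseteq F_{01}$, Lemma~\ref{implied_ineq} shows that any integer feasible solution satisfying~\eqref{aggregated_ineq} also satisfies the disaggregated constraints~\eqref{dcnp01}; the shared constraints again carry over, so any point of $F_{\mathrm{agg}}$ lies in $F_{01}$. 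Combining the two inclusions yields $F_{01}=F_{\mathrm{agg}}$ and completes the argument.

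The step to emphasize is not a computational obstacle but a conceptual one: this equivalence holds only at the level of the \emph{integer} feasible set, not at the level of the continuous LP relaxation. The single averaged inequality~\eqref{aggregated_ineq} is in general weaker than the family~\eqref{dcnp01} when the $y$-variables are allowed to be fractional, since a fractional $\hat y_i$ can make the averaged left-hand side strictly smaller than the worst individual term $1-\hat y_u-\hat y_v-\hat y_i$. Integrality is precisely what rescues the reverse inclusion, and this is exactly the feature exploited by Lemma~\ref{implied_ineq}; the theorem should therefore be read as an equivalence of the mixed-integer models, with the understanding that the two LP relaxations may differ.
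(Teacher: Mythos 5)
Your proof is correct and follows essentially the same route as the paper, which also establishes the theorem by invoking Lemma~\ref{validity} for one inclusion and Lemma~\ref{implied_ineq} for the other. Your added observation that the equivalence holds only for the integer feasible sets, not the LP relaxations, is a worthwhile clarification that the paper leaves implicit.
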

\begin{proof}
    The proof follows by Lemmas~\ref{validity} and~\ref{implied_ineq}.
\end{proof}
Section~\ref{subsection: mip formulations} discusses how aggregated inequalities improve the computational performance of the 2-DCNDP MIP for graphs with densities of at least $0.5 \%$.

\section{Computational Experiments: A case study for Newfoundland and Labrador}\label{section: experiment} 
We conduct our computational experiments on a simulated contact network of individuals in Newfoundland and Labrador (NL), generated by the Medical Operations Research Lab's Pandemic Outbreak Planner (morPOP) using census, workplace, school, travel, and other various population data for NL~\cite{aleman2020morpop}. 
morPOP is a state-of-the-art granular agent-based simulation (ABS) modeling tool used to generate realistic contact networks between individuals. 
It constructs human interaction networks by considering population density, community structures, and individual behavior reflected by census data. 
We use the 2016 census of NL~\cite{statcan_NL_2016} to generate population snapshots. 
Each snapshot consists of a network of 507,555 individuals (the population of NL at the time) represented in the edge list format. Detailed computational results for the MIP formulations of the 1-DCNDP and 2-DCNDP, along with codes and data, are  available at~\href{https://github.com/faraz2023/optimal-design-of-vaccination-policies}{\url{https://github.com/faraz2023/optimal-design-of-vaccination-policies}}.

To handle the high-density instances of the problem, we first decompose the input graph into multiple partitions based on the regional health authorities (RHAs).
This assumption is reasonable as health regions are units of health governance for administration and delivery of public health in the Canadian healthcare model~\cite{wiki2023}.
NL has four RHAs: Eastern Health, Central Health, Western Health, and Labrador-Grenfell Health.
When the size of each health authority is not computationally tractable by the MIPs of 1-DCNDP and 2-DCNDP, we employ the \textit{NetworkX-METIS} implementation of multilevel multiway partitioning algorithm that minimizes the number of cross-partition edges~\cite{NetworkX-METIS2022}. 
Finally, we employ a simplicial fixing procedure (see \citet{salemi2022}, Proposition 3) as soon as the partition can be handled by the 1-DCNDP and 2-DCNDP MIPs.
For each ``handlable'' partition, we set the budget $b$ to $20\%$ of the number of nodes in the partition. 

We run our experiments on Dell XPS 8940 desktop with  Windows 11 PC Pro, Core i9-111900K CPU (8 cores, 16 threads, 3.50Ghz 5.10GHz Turbo, 16MB, 95W), and 64GB DDR4 RAM.
Our codes are written in Python 3.10.9.
We employ Gurobi 10.0.1 to run the MIP formulations of 1-DCNDP and 2-DCNDP instances.
The Gurobi time limit is set to one hour for all instances of the 1-DCNDP and 2-DCNDP MIP formulations.
Due to the space limitation, we present only our computational results with aggregated inequalities of the MIP formulation for the 2-DCNDP in Section~\ref{subsection: mip formulations}.   

\subsection{Experiments with aggregated inequalities for the MIP of the 2-DCNDP}\label{subsection: mip formulations}
As mentioned in Section~\ref{section: 2-hop-connectivity}, we proposed aggregated inequalities to make the MIP size of the 2-DCNDP smaller in terms of the number of constraints.
In terms of optimality gap, Table~\ref{aggregated_results} shows the superiority of the 2-DCNDP MIP with aggregated inequalities over disaggregated inequalities for 10 out of 15 instances.
Interestingly, the 2-DCNDP MIP with aggregated inequalities solves partition 18 of Eastern Health to optimality while the disaggregated variant does not solve even the LP relaxation of the instance in one hour.  
For three instances (i.e., partition 12 of Labrador-Grenfell, partition 42 of Eastern Health, and partition 121 of Eastern Health), the optimality gaps are the same for both approaches.
However, the aggregated approach provides better objective values for two of these three instances.
While we observe that the disaggregated approach outperforms for two instances (i.e., partition 58 of Eastern Health and partition 29 of Western Health) in terms of optimality gap, the aggregated approach provides better objective values for both instances.
In terms of objective value, the aggregated variant of the 2-DCNDP MIP works at least as well as the disaggregated variant for all instances except partition 16 of Eastern Health.

\begin{table}[tbp]
\centering
\caption{Computational results for the disaggregated inequalities~\eqref{dcnp01} and aggregated inequalities~\eqref{aggregated_ineq} of the MIP formulation of the 2-DCNDP. Part: partition ID; $d (\%)$: density; 2-hop dec ($\%$): 2-hop decrease; LPNS: LP relaxation of the 2-DCNDP MIP is not solved in an hour. Bold, underlined values are the better results of aggregated v.\ disaggregated inequalities.}
\begin{tabularx}{1\textwidth}{l|r|r|r|r|rr|rr}
\hline
& 
& 
& 
& 
& \multicolumn{2}{c|}{2-hop dec (\%)} & \multicolumn{2}{c}{Opt gap (\%)} \\ \cline{6-9}
RHA & Part. & $n$ & $m$ & $d~(\%)$ & Agg. & Disagg. & Agg. & Disagg. \\ \hline
East  & 16 & 2,456 & 15,137 & 0.50 & 91.48 & \underline{\textbf{91.50}} & \underline{\textbf{5.96}} & 6.27\\
East & 13 & 2,550 & 16,476 & 0.51 & \underline{\textbf{89.29}} & 67.29 & \underline{\textbf{47.00}} & 100.00\\
La-Gr & 12 & 2,420 & 16,501 & 0.56 & \underline{\textbf{63.21}} & 60.44 & 100.00 & 100.00\\
East & 58 & 2,476 & 17,116 & 0.56 & \underline{\textbf{97.61}} & 97.60 & 0.68 & \underline{\textbf{0.55}}\\
West & 9 & 2,502 & 18,799 & 0.60 & \underline{\textbf{95.84}} & 95.78 & \underline{\textbf{9.21}} & 11.97\\
La-Gr & 3 & 2,420 & 19,387 & 0.66 & \underline{\textbf{91.61}} & 88.29 & \underline{\textbf{45.19}} & 63.40\\
East & 42 & 2,432 & 19,478 & 0.66 & 96.54 & 96.54 & 0.00 & 0.00\\
Cent & 35 & 2,508 & 22,103 & 0.70 & \underline{\textbf{95.72}} & 68.28 & \underline{\textbf{36.58}} & 100.00\\
East & 96 & 2,539 & 22,497 & 0.70 & \underline{\textbf{93.35}} & 66.66 & \underline{\textbf{59.57}} & 100.00\\
Cent & 8 & 2,513 & 22,460 & 0.71 & \underline{\textbf{92.86}} & 63.32 & \underline{\textbf{49.63}} & 100.00\\
West & 1 & 2,350 & 21,749 & 0.79 & \underline{\textbf{93.47}} & 92.68 & \underline{\textbf{9.90}} & 20.91\\
East & 116 & 2,567 & 33,030 & 1.00 & 92.44 & 92.44 & \underline{\textbf{72.05}} & 72.34\\
West & 29 & 2,504 & 31,833 & 1.02 & \underline{\textbf{80.71}} & 80.35 & 63.07 & \underline{\textbf{62.04}}\\
East & 121 & 2,547 & 37,497 & 1.16 & \underline{\textbf{46.44}} & 46.05 & 100.00 & 100.00\\
East & 18 & 2,457 & 74,868 & 2.48 & \underline{\textbf{99.31}} & 98.88 & \underline{\textbf{0.00}} & LPNS \\
\hline
\end{tabularx}
\label{aggregated_results}
\end{table}

\subsection{Strategic vaccine allocation policies via decision trees}\label{subsection: ml}

We implement a pipeline based on decision trees to explain (i.e., predict) the DCNDP solutions via the simulated individuals' attributes and demographic information.
This process facilitates the construction of pragmatic vaccination strategies, potentially valuable for public health policymakers. 
The DCNDP solutions are selected solely based on the topology of the contact network, which itself is a statistical representation of the population, and are not implementable in real-world vaccine strategies as these topological properties resulting in DCNDP selection are not easily describable to policymakers and individuals in a real population.
To transform DCNDP solutions to actionable policies, we train decision trees to predict the DCNDP label (vaccinate or not vaccinate) of the individuals via the rich and granular personal attributes simulated by morPOP for each node.

We formulate the prediction task of the DCNDP solutions as a transductive (i.e., testing samples are available during training) supervised learning scenario and use the \texttt{scikit-learn} library~\cite{scikit-learn} in Python to implement the decision trees. In our experience, trees with a maximum depth of five based on entropy criterion yield the best performance. In total, we train 10 models: a whole-population tree and four per-RHA trees for each of the 1-DCNDP and 2-DCNDP solutions.

We study each decision tree manually and derive rollout phases with clear boundaries. A succession of rollout phases constitutes a vaccination policy. Remarkably, all 10 decision trees yield similar interpretations for rollout phases\footnote{Visualizations of all decision trees are available on the GitHub repository.}.
While the DCNDP-inspired rollout is a raw and analytical implication of the trained trees that is obtained by solely minimizing a network connectivity metric, policymakers pursue multi-objective vaccination policies that strike a balance between disease transmission and the risk to the frontline workers or the high-risk population. 
Motivated by these real-world considerations, we also provide an alternative DCNDP rollout (which we call DCNDP-realistic rollout) in which we manually impose early vaccination for healthcare workers, urgent-care patients, and individuals above age 80 (see Figure~\ref{fig:dt_viz}). 
We evaluate our DCNDP-inspired vaccine rollouts against the real-world 3-phase vaccination plan implemented by the Government of NL during the COVID-19 pandemic (Figure~\ref{fig:3phase_vis}).

\begin{figure}[tbp]
    \centering
    \begin{subfigure}{.52\textwidth}
        \centering
        \includegraphics[width=1\linewidth]{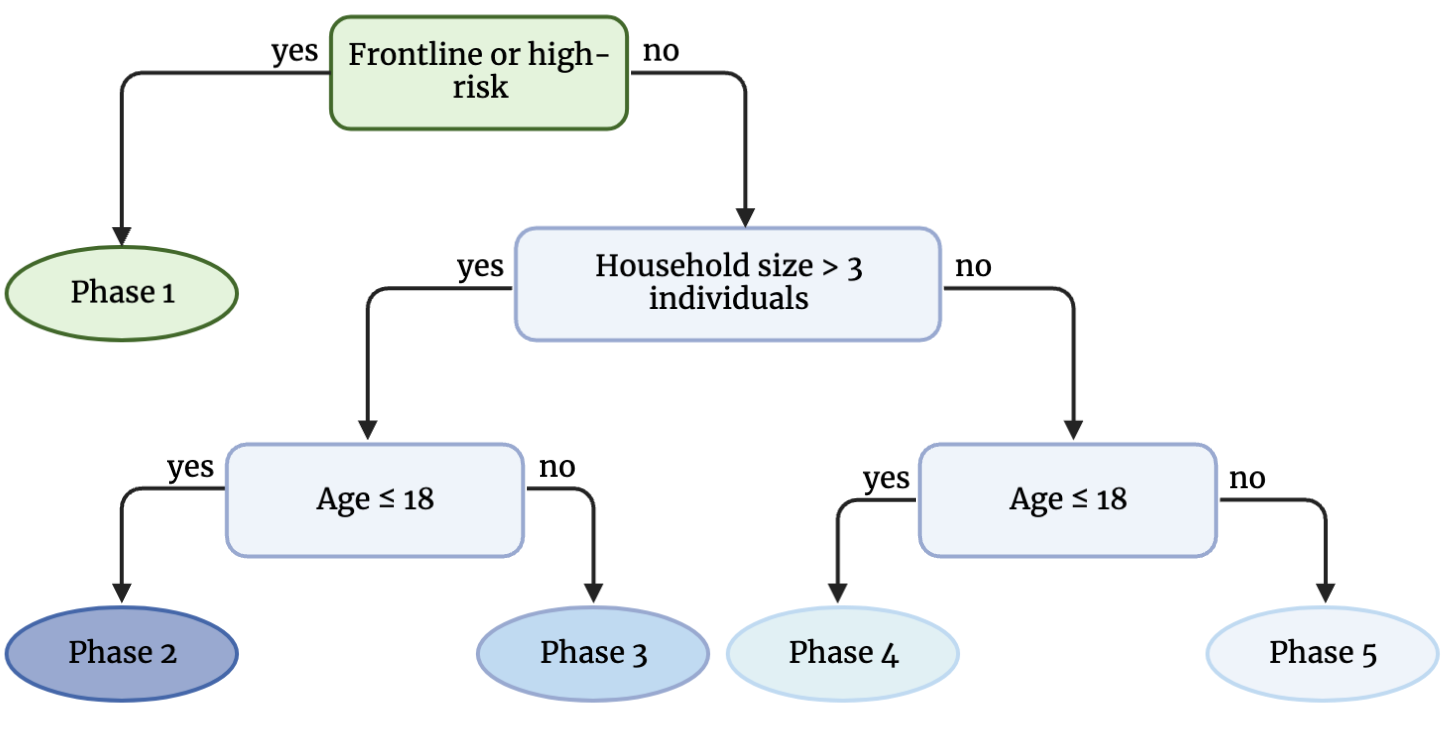}
        \caption{DCNDP-realistic rollout}
        \label{fig:dt_viz}
    \end{subfigure}%
    \hfill
    \begin{subfigure}{.47\textwidth}
        \centering
        \includegraphics[width=1\linewidth]{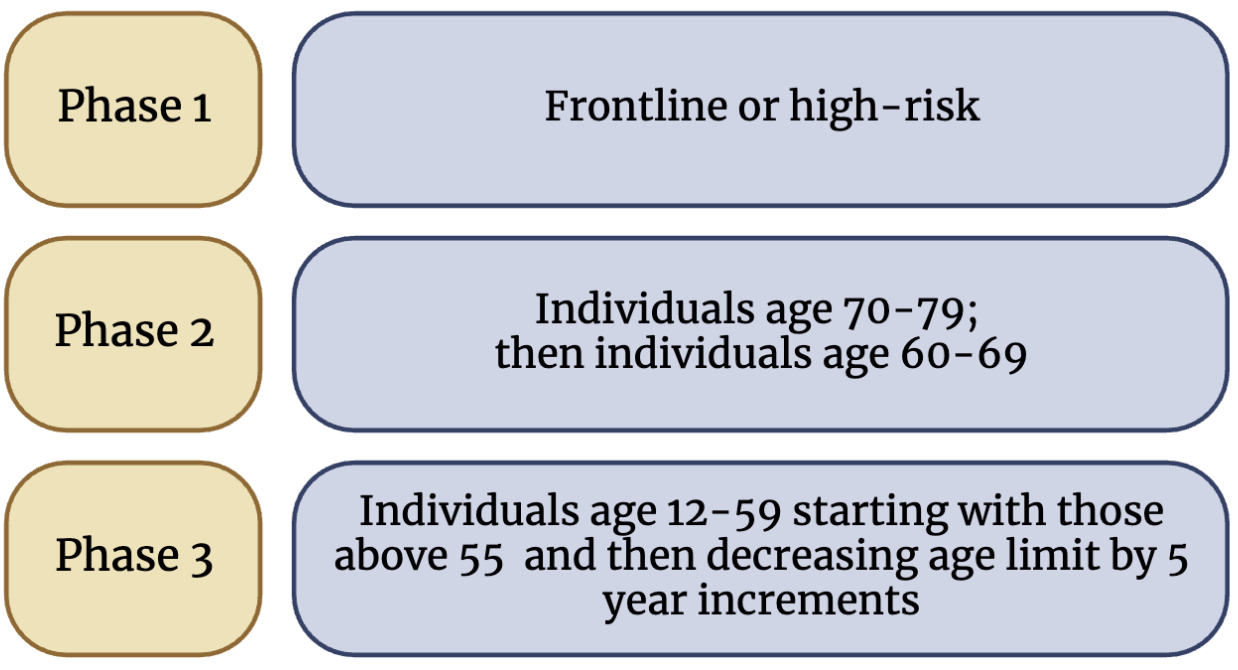}
        \caption{Real-world 3-phase vaccination rollout ``baseline'' implemented by NL\protect\footnotemark}
        \label{fig:3phase_vis}
    \end{subfigure}
    \caption{Vaccination prioritization strategies. We note that phases two and three of the real-world strategies have two and 10 inner stages, respectively.
    }
    \label{fig:vaccine_policies}
\end{figure}

\footnotetext{\url{https://www.gov.nl.ca/covid-19/vaccine/prioritygroups/}}

We evaluate each vaccination rollout policy as a 100-day plan where the daily vaccination budget is set to $1\%$ of the population. Each day we randomly vaccinate individuals from the current rollout phase by removing them from the network (for simplicity, we assume a vaccine efficacy of $100\%$). Figures~\ref{fig:vaccine_1hop}-\ref{fig:vaccine_R0} provide a comparison between the two DCNDP rollouts and real-world baseline with respect to three metrics: 1-hop connections, 2-hop connections, and $R_0$. We calculate $R_0$ as follows.
\begin{equation}
R_0=T\left(\frac{\sum_{d=1}^{\infty} p_d d^2}{\sum_{d=1}^{\infty} p_d d}-1\right)=T\left(\frac{\left\langle d^2\right\rangle}{\langle d\rangle}-1\right)
\label{eq:r0}
\end{equation}
where $p_d$ is the observed probability of degree $d$, $T$ is disease transmissibility, and $\left\langle d\right\rangle$ and $\left\langle d^2\right\rangle$ are the mean degree and the mean squared degree of the network, respectively~\cite{newman2002spread}. We note that the disease transmissibility $T$ is calculated independently of network structure, and therefore, the $R_0$ calculation relies on secondary methods~\cite{li2011failure}. In this work, we conduct our analysis independent of any particular disease (i.e., $T=1$) to demonstrate an upper bound for the worst-case disease scenario~\cite{ventresca2013evaluation}.

Table~\ref{tab:vaccine_results} demonstrates that our proposed DCNDP-realistic vaccine rollout can have a better performance in comparison with the real-world baseline rollout in terms of both 1-DCNDP and 2-DCNDP metrics as well as $R_0$.
While Figures~\ref{fig:vaccine_1hop} and~\ref{fig:vaccine_2hop} show the superiority of both DCNDP approaches over the real-world baseline in terms of 1-hop and 2-hop connections, we observe no significant difference between the curves of DCNDP and DCNDP-realistic rollouts.

Interestingly, Figure~\ref{fig:vaccine_R0} shows that $R_0$ is not monotonically decreasing, which is counterintuitive since more vaccinations (i.e., node and link removals) should not increase a network transmissibility metric.
This numerical instability is due to the dependence of $R_0$ calculations on the degree distribution of a given network and that the ratio between $\left\langle d\right\rangle$ and $\left\langle d^2\right\rangle$ in Equation~\eqref{eq:r0} can behave unpredictably when nodes are changed to have degree zero.
It is also worth noting that $R_0$ can be misleading for studying vaccine policy implementation since the metric is designed only as a threshold to predict whether a pandemic will die down or become an endemic~\cite{ventresca2013evaluation}.

\begin{table}[tbp]
\centering
\caption{Analysis of the DCNDP-realistic vaccine rollout and the real-world baseline rollout. Area (\%): percentage of area shrinkage between the two curves.}
\resizebox{\textwidth}{!}{%
\begin{tabular}{l|c|r|r|r|r}
\hline
& & & \multicolumn{1}{c|}{Days with} & \multicolumn{2}{c}{Daily improv. (\%)} \\\cline{5-6}
RHA & Metric & Area (\%) & improv. (\%) & Avg.  & Std. \\\hline
\multirow{3}{*}{Overall NL} 
& 1-hop & 27.75 & 92.23 & 43.62 & 27.23 \\
& 2-hop & 23.99 & 92.23 & 42.73 & 27.99 \\
& $R_0$ & 28.32 & 91.26 & 32.77 & 30.01 \\
\hline
\multirow{3}{*}{East} 
& 1-hop & 28.33 & 93.14 & 44.01 & 27.77 \\
& 2-hop & 22.22 & 93.14 & 42.13 & 28.91 \\
& $R_0$ & 29.59 & 92.16 & 33.78 & 31.23 \\
\hline
\multirow{3}{*}{Cent} 
& 1-hop  & 24.93 & 91.18 & 40.93 & 27.83 \\
& 2-hop  & 20.08 & 91.18 & 39.25 & 29.06 \\
& $R_0$ & 23.37 & 82.35 & 27.50 & 31.97 \\
\hline
\multirow{3}{*}{West} 
& 1-hop  & 25.27 & 91.18 & 41.84 & 28.12 \\
& 2-hop  & 21.61 & 91.18 & 41.27 & 28.93 \\
& $R_0$ & 24.88 & 91.18 & 29.68 & 31.30 \\
\hline
\multirow{3}{*}{La-Gr} 
& 1-hop  & 15.51 & 94.12 & 28.97 & 24.95 \\
& 2-hop  & 5.01 & 65.69 & 14.95 & 29.87 \\
& $R_0$ & 0.30 & 49.02 & 5.06 & 33.83 \\\hline
\end{tabular}
}
\label{tab:vaccine_results}
\end{table}

\begin{figure}[tbp]
    \centering
    \begin{subfigure}{.51\textwidth}
        \centering
        \includegraphics[trim={3cm 3cm 3cm 3cm},width=1\linewidth]{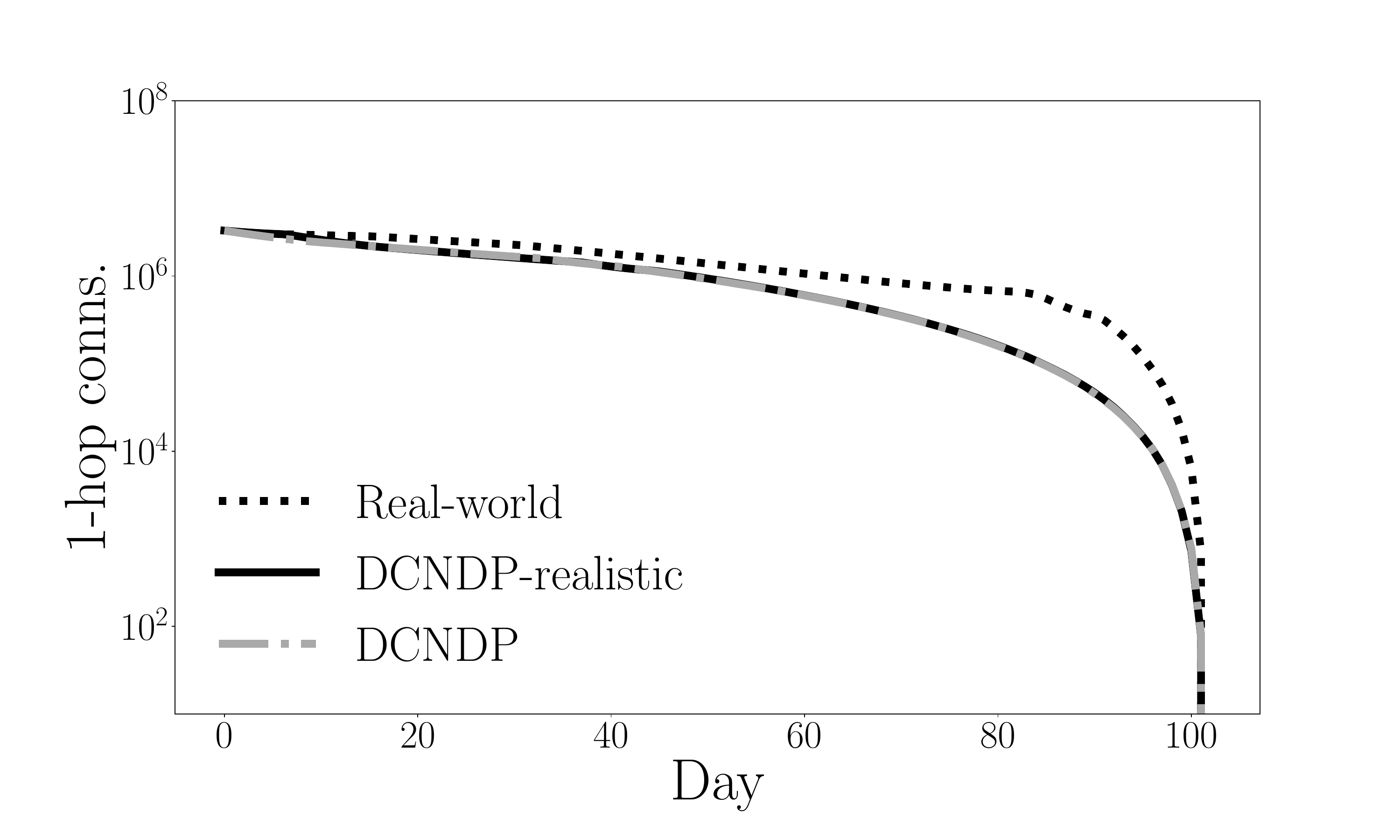}
        \vspace{2pt}
        \caption{NL total}
        \label{fig:HA_1hop}
    \end{subfigure}%
    \hfill
    \begin{subfigure}{.47\textwidth}
        \centering
        \includegraphics[trim={3cm 3cm 3cm 3cm},width=1\linewidth]{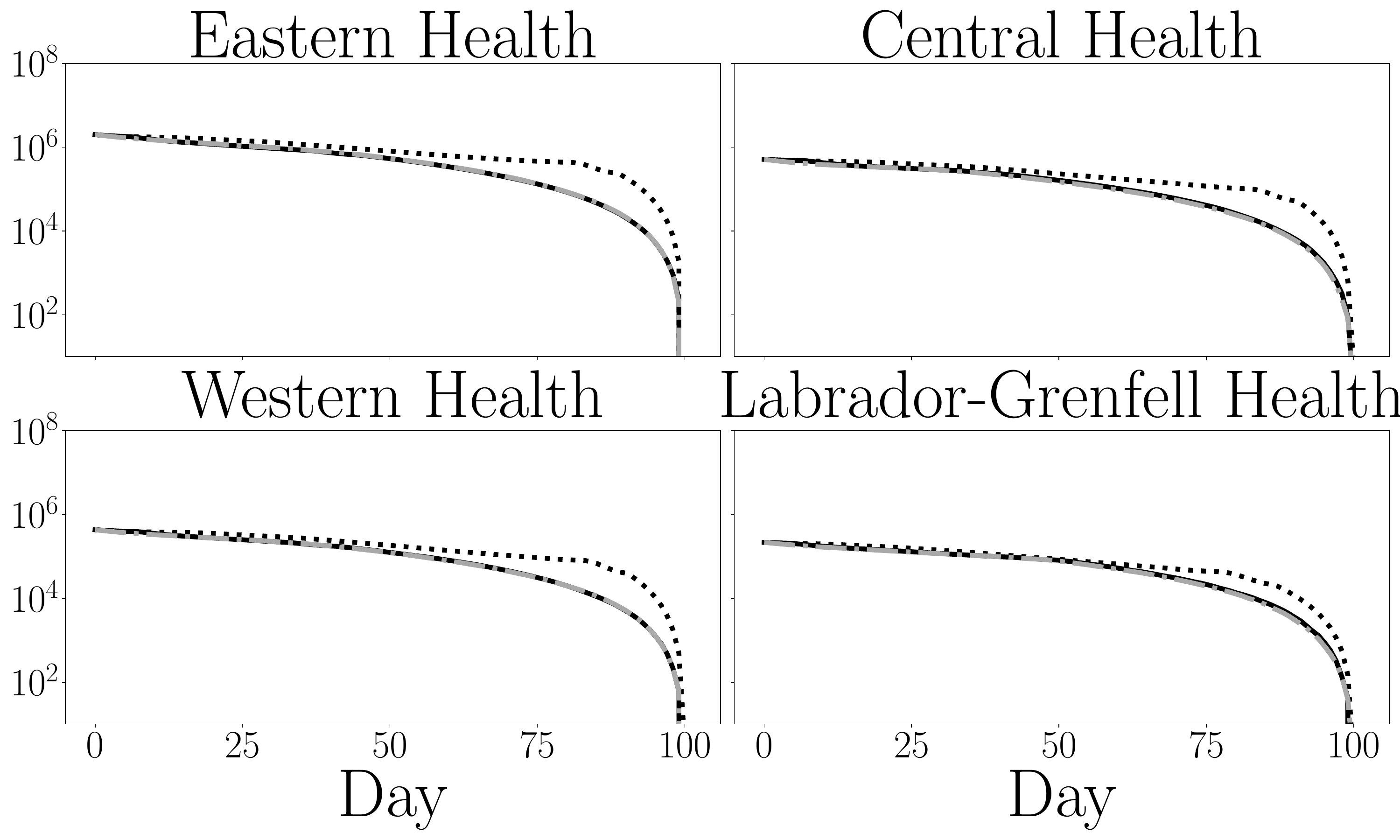}
        \vspace{2pt}
        \caption{Individual RHAs}
        \label{fig:overall_1hop}
    \end{subfigure}
    \caption{1-hop connection performance based on vaccine rollout strategies}
    \label{fig:vaccine_1hop}
\end{figure}

\begin{figure}[tbp]
    \centering
    \begin{subfigure}{.51\textwidth}
        \centering
        \includegraphics[trim={3cm 3cm 3cm 3cm},width=1\linewidth]{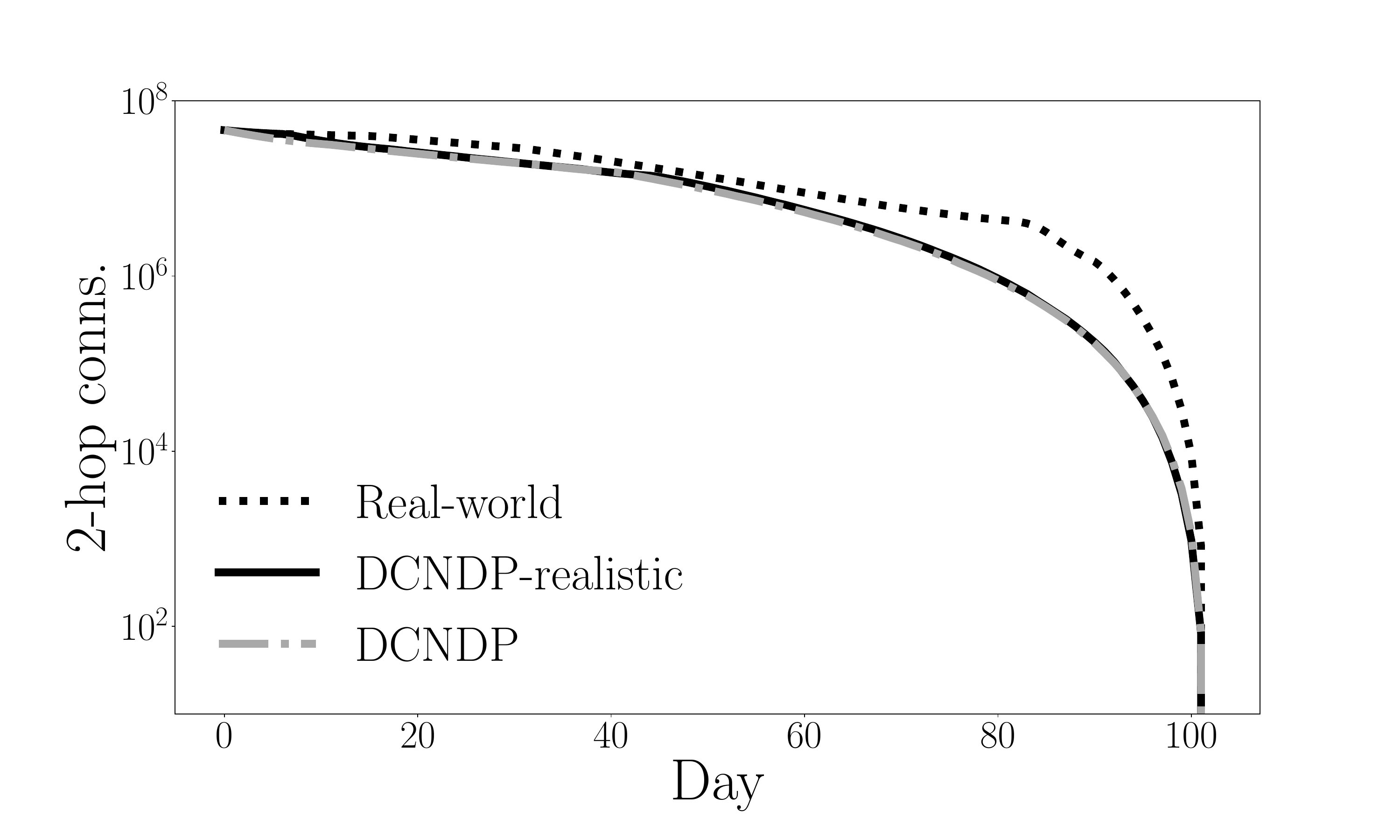}
        \vspace{2pt}
        \caption{NL total}
        \label{fig:HA_2hop}
    \end{subfigure}%
    \hfill
    \begin{subfigure}{.47\textwidth}
        \centering
        \includegraphics[trim={3cm 3cm 3cm 3cm},width=1\linewidth]{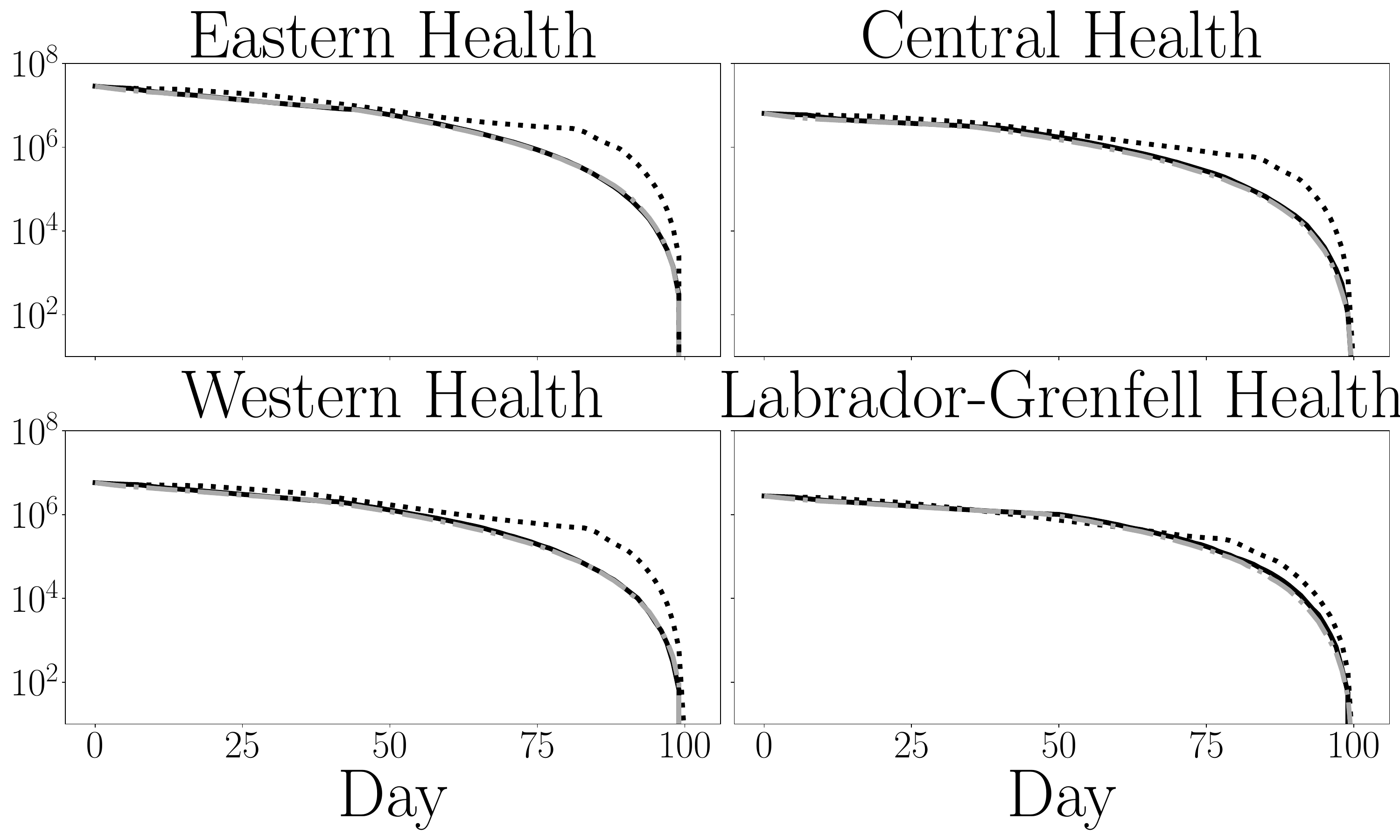}
        \vspace{2pt}
        \caption{Individual RHAs}
        \label{fig:overall_2hop}
    \end{subfigure}
    \caption{2-hop connection performance based on vaccine rollout strategies}
    \label{fig:vaccine_2hop}
\end{figure}

\begin{figure}[tbp]
    \centering
    \begin{subfigure}{.51\textwidth}
        \centering
        \includegraphics[trim={3cm 3cm 3cm 3cm},width=1\linewidth]{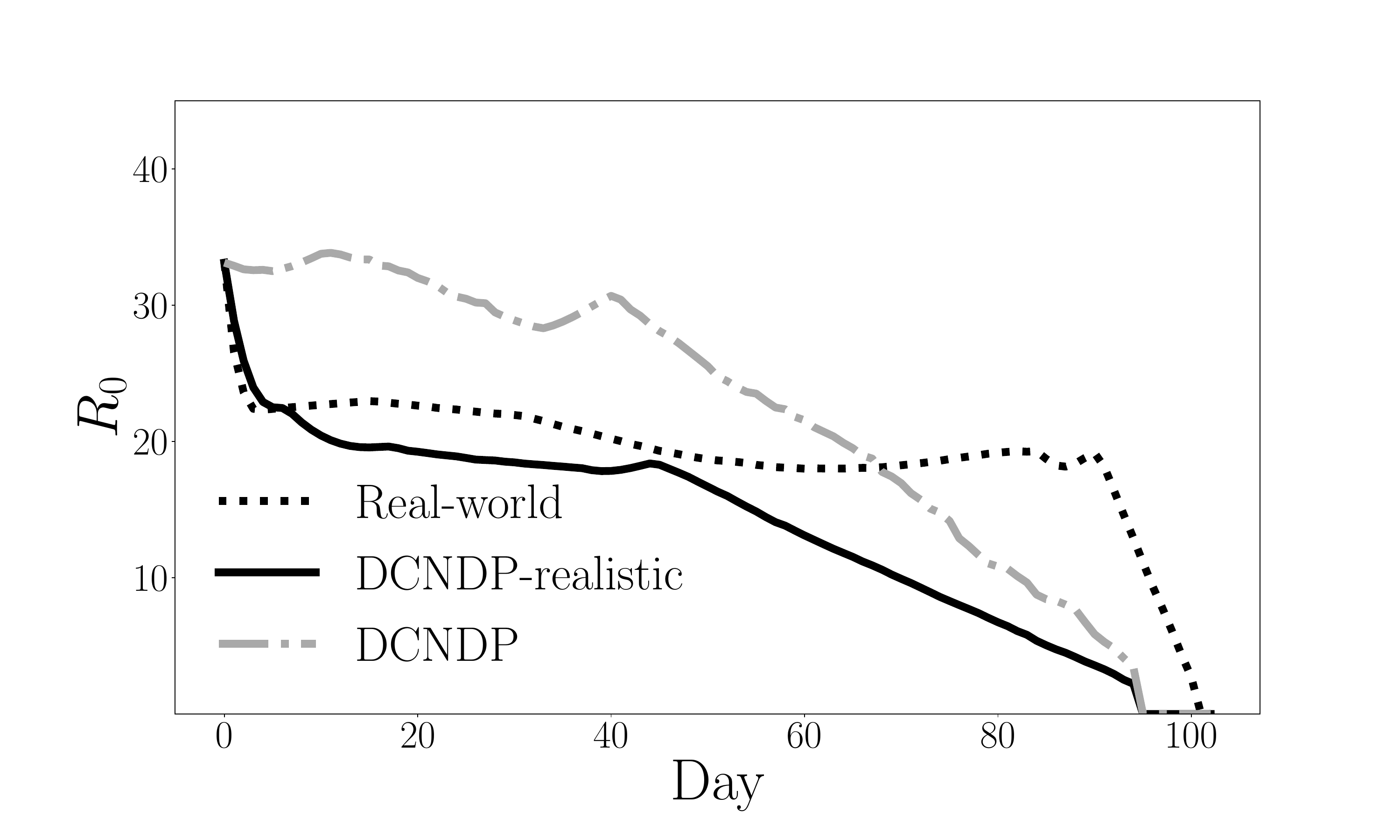}
        \vspace{2pt}
        \caption{NL total}
        \label{fig:HA_R0}
    \end{subfigure}%
    \hfill
    \begin{subfigure}{.47\textwidth}
        \centering
        \includegraphics[trim={3cm 3cm 3cm 3cm},width=1\linewidth]{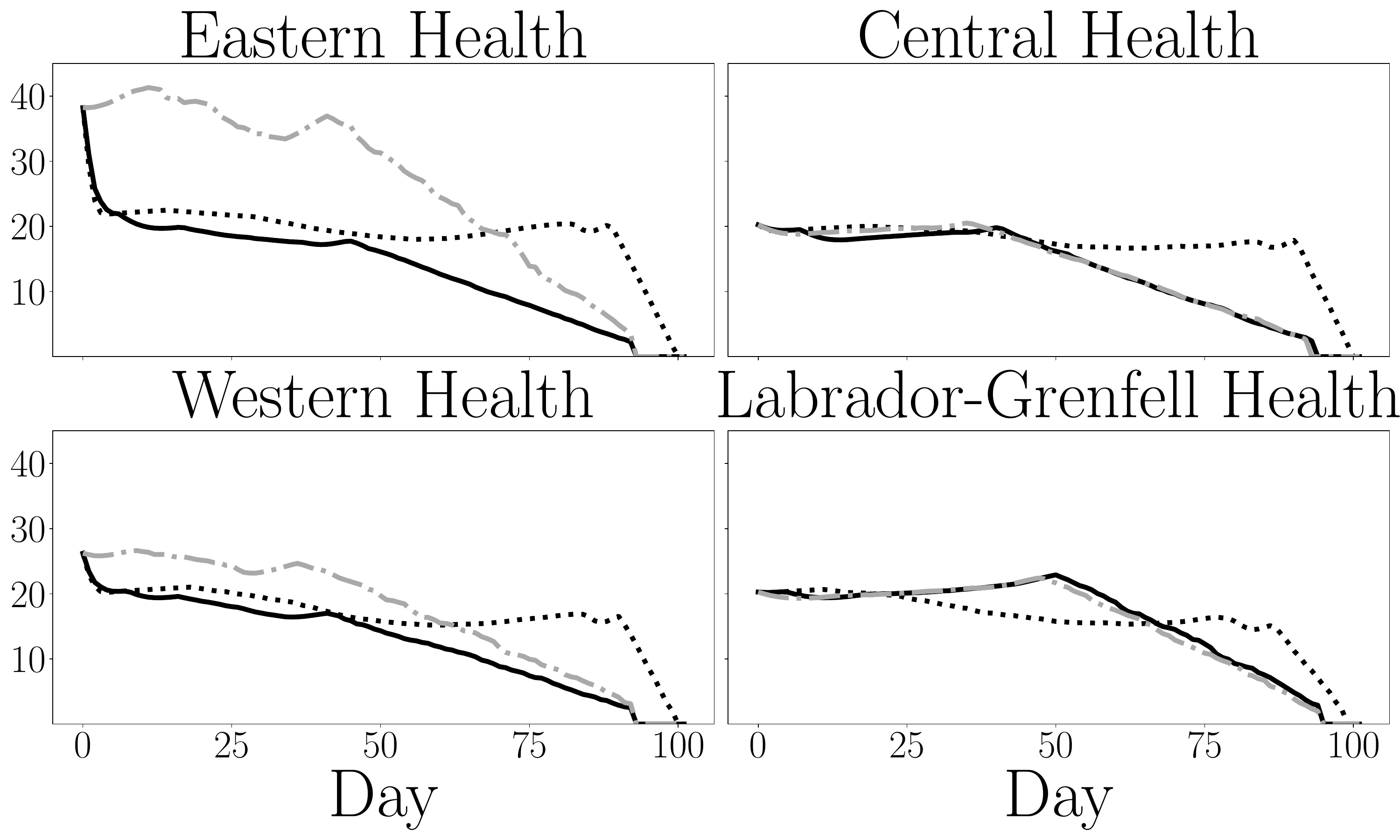}
        \vspace{2pt}
        \caption{Individual RHAs}
        \label{fig:overall_R0}
    \end{subfigure}
    \caption{$R_0$ performance based on vaccine rollout strategies
    }
    \label{fig:vaccine_R0}
\end{figure}

\section{Conclusion and Future Work}
This paper explores the boundary of MIP formulations of DCNDP to minimize the number of connections of length at most one or two in a vaccination context.
When minimizing the number of connections of length at most one is desired, we provided a polyhedral study for its corresponding MIP formulation and proved when the trivial and non-trivial constraints are facet-defining.
For the DCNDP with the objective of minimizing the number of connections of length at most two, we reduced the number of a set of constraints in an order of magnitude and proposed a new set of valid aggregated inequalities.
Our computational results in a vaccination context show the superiority of the aggregated inequalities over the disaggregated ones when the density of the input graph is at least $0.5\%$.
We finally employed decision trees by leveraging rich and granular data afforded by morPOP to construct potential strategic vaccine rollout policies for Newfoundland and Labrador, demonstrating that DCNDP-driven policies are more effective than common real-world strategies for mitigating disease spread. However, it is important to note that real-world policies also considered limiting mortality and severity, not just infections, though our DCNDP-realistic policy attempted to capture that concern by prioritizing healthcare workers and vulnerable individuals in the first priority group. We additionally found that $R_0$ is an unreliable metric of disease spread potential in a critical node context, due to its unstable behavior in the presence of zero-degree nodes.

As future work, one can explore other variants of the DCNDP in a vaccination context and apply MIP tools and techniques (e.g., warm-starting MIPs with quality heuristic solutions and reducing the number of decision variables and constraints to decrease the size of the MIP formulations) to them. 
Additionally, the evaluation of decision trees for deriving vaccination policies has been done manually in this work and can be automated in future work.

\section*{Acknowledgments}

We thank Dr.\ Proton Rahman (Eastern Health, NL) and Dr.\ Randy Giffen (IBM Canada) for their help in building the morPOP agent-based simulation model for COVID-NL and for their expertise in COVID mitigation strategies. We also thank Newfoundland \& Labrador Public Health for their guidance and data contributions to morPOP.

\bibliographystyle{elsarticle-num-names} 
\bibliography{main}

\end{document}